\date{}
\newtheorem{statement}{}[section]
\newtheorem{theorem}[statement]{Theorem}
\newtheorem{lemma}[statement]{Lemma}
\newtheorem{example}[statement]{Example}
\newtheorem{proposition}[statement]{Proposition}
\newtheorem{definition}[statement]{Definition}
\newtheorem{corollary}[statement]{Corollary}
\newcommand\C{\mathbb C}
\newcommand\R{\mathbb R}
\newcommand\D{\mathbb D}
\newcommand\e{{\rm e}}
\newcommand\ind{\mathds{1}}
\newcommand\dis{\displaystyle}
\newcommand\converge{\mathop{\longrightarrow}\limits}
\newcommand{\biindice}[3]%
{%

\begin{array}[t]{c}
{\displaystyle #1}\\
{\scriptstyle #2}\\
{\scriptstyle #3}
\end{array}

}
\let\phi=\varphi
\let\hat = \widehat
\let\tilde=\widetilde
\title{\bf On some questions about composition operators on weighted Hardy spaces}
\author{\it Pascal~Lef\`evre, Daniel~Li,  \\ \it Herv\'e~Queff\'elec, Luis~Rodr{\'\i}guez-Piazza}
\date{{Dedicated to Gilles Godefroy for his $70^{th}$ birthday} \\ 
\bigskip \footnotesize \today}
\begin{document}

\maketitle

\noindent {\bf Abstract.} We first consider some questions raised by N.~Zorboska in her thesis. In particular she asked for which sequences $\beta$ every 
symbol $\varphi \colon \D \to \D$ with $\varphi \in H^2 (\beta)$ induces a bounded composition operator $C_\phi$ on the weighted Hardy space $H^2 (\beta)$. 
We give partial answers and investigate when $H^2 (\beta)$ is an algebra.  
We answer negatively another question in showing that there are a sequence $\beta$ and $\varphi \in H^2 (\beta)$ such that $\| \varphi \|_\infty < 1$ and the 
composition operator $C_\varphi$ is not bounded on $H^2 (\beta)$. 

In a second part, we show that for $p \neq 2$, no automorphism of $\D$, except those that fix $0$, induces a bounded composition operator on the 
Beurling-Sobolev space $\ell^p_A$, and even on any weighted version of this space. 
\medskip

\noindent {\bf MSC 2010.} primary: 47B33 ; secondary: 30H10
\smallskip

\noindent {\bf Key-words.} composition operator; oscillatory integrals; weighted Hardy space. 

\section {Introduction} \label{sec: intro}

Let $\beta = (\beta_n)_{n \geq 0}$ be a sequence of positive numbers such that 
\begin{equation} \label{condition on beta}
\liminf_{n \to \infty} \beta_n^{1 / n} \geq 1 \, . 
\end{equation}
The associated weighted Hardy space $H^2 (\beta)$ is the Hilbert space of analytic functions $f \colon \D \to \C$ such that 
\begin{equation}
\| f \|^2 := \sum_{n = 0}^\infty |a_n |^2 \beta_n < + \infty \, ,
\end{equation}
if $f (z) = \sum_{n = 0}^\infty a_n z^n$. We shall denote $a_n = \widehat{f}(n)$. 
Note that condition \eqref{condition on beta} ensures that those functions are indeed analytic on $\D$, and conversely. 
\smallskip

More generally, for $1 \leq p \leq \infty$, the weighted space $h^{p}(\beta)$ (called Beurling-Sobolev space in \cite{ENZ} and \cite{SZZA}) 
is the space of all analytic functions $f (z) = \sum_{n = 0}^\infty a_n z^n$ on the unit disk $\D$ such that $\| f \|_{\beta}$ is finite, where 
\begin{displaymath}
\Vert f \Vert_{\beta}^{p} := \sum_{n = 0}^\infty |a_n|^p \beta_n \, .
\end{displaymath}
for $1 \leq p < \infty$, and $\| f \|_\infty = \sup_{n \geq 0} |a_n|\, \beta_n$. For $\beta \equiv 1$, we write simply $h^p$ or $\ell^p_A$. 

Observe that $h^2$ is none other than the usual Hardy space $H^2$, and that $h^{2} (\beta)  = H^{2} (\beta)$. However, for $p \neq 2$, 
$h^p$ has nothing to do with the usual Hardy space $H^p$, even if, for example, $H^p \subseteq h^\infty$. But in the spirit of our recent work 
\cite{LLQR-weighted}, the case of $h^p$ seems a natural one to consider. 

Again, \eqref{condition on beta} is equivalent to the inclusion $h^{p} (\beta) \subseteq {\mathscr H} (\D)$ and allows to treat the elements of 
$h^{p} (\beta)$ as analytic functions on $\D$.

Though most of our results hold for all $p \geq 1$, we first restrict ourselves to $p = 2$, and only consider the other exponents $p$ in Section~\ref{sec hp}. 
\medskip

If $\phi \colon \D \to \D$ is a non-constant analytic self-map of $\D$, we say that $\phi$ is a \emph{symbol}. To each symbol, we associate the 
\emph{composition operator} $C_\phi$ defined as $C_\phi (f) = f \circ \phi$ for every analytic function $f \colon \D \to \C$. 
\medskip

In her thesis, N.~Zorboska \cite{ZorboPhD} raised the following problems. 
\smallskip 

\begin{itemize}
\item [1)] Determine the spaces $H^2 (\beta)$ for which every symbol $\phi$, with $\phi \in H^2 (\beta)$, induces a bounded composition operator on 
$H^2 (\beta)$ (Problem~2). 
\smallskip

\item [2)] If $\phi \in H^2 (\beta)$ and $\| \phi \|_\infty < 1$, is the composition operator $C_\phi$ necessarily bounded on $H^2 (\beta)$? (Problem~3). 
\end{itemize}
\smallskip

In Section~\ref{answers}, we give partial answers to the first problem (Theorem~\ref{theo CN SO} and 
Theorem~\ref{partial answer}), and a negative one to the second problem (Theorem~\ref{cortheo not algebra}). 
\smallskip

In Section,~\ref{sec hp}, we show that for $p \neq 2$, no automorphism $T_a$ of $\D$ (to be defined right after), with $a \neq 0$, induces a 
bounded composition operator on the weighted Hardy space $h^p (\beta)$, whatever $\beta$. 
\medskip

After the first version of this paper was completed, we learned the existence of \cite{N1} and \cite{ENZ}. This version takes these papers into account. 
We thank O.~El-Fallah for sending us a copy of the english version of \cite{ENZ}. We thank also {N.~}Nikolski who learned us that 
\cite[\S\,3.2]{N4} deals with the question of weighted algebras, and that this question goes back to \cite{N2} (without proof), \cite{N3}, 
and \cite{N1} (without proof). 

\section{Answers to Zorboska's questions}  \label{answers}

\subsection{A necessary condition}

In \cite{LLQR-weighted}, we characterized the weights $\beta$ for which all composition operators are bounded on $H^2 (\beta)$: this happens if and only 
if $\beta$ is \emph{essentially decreasing} and \emph{slowly oscillating}.
\goodbreak 

\begin{definition}
A sequence of positive numbers $\beta = (\beta_n)_{n\geq 0}$ is said to be

$1)$ \emph{essentially decreasing} if, for some constant $C \geq 1$, we have
\begin{equation} 
\qquad \beta_m \leq C \, \beta_n \quad \text{for all } m \geq n \geq 0 \, ;
\end{equation} 

$2)$ \emph{slowly oscillating} if there are positive constants $c < 1 < C$ such that 
\begin{equation}
\qquad \quad c \, \beta_n \leq \beta_m \leq C \, \beta_n \quad \text{when } n / 2 \leq m \leq 2 n \, . 
\end{equation}
\end{definition}

Note that if $\beta$ is slowly oscillating, then $\liminf_{n \to \infty} \beta_n^{1 / n} = 1$.
\medskip

First, for the first Zorboska's question, we have the following necessary condition. 

\begin{theorem} \label{theo CN SO}
Assume that $\lim_{n \to \infty} \beta_n^{1 / n} = 1$. 

If every symbol $\phi \in H^2 (\beta)$ induces a bounded composition operator on $H^2 (\beta)$, then $\beta$ is slowly oscillating. 
\end{theorem}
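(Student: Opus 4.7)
The proof proceeds by contrapositive: assuming $\beta$ is not slowly oscillating, we exhibit a symbol $\varphi \in H^2(\beta)$ for which $C_\varphi$ is unbounded on $H^2(\beta)$. After extracting a subsequence, and possibly interchanging the two indices (the condition $n/2 \leq m \leq 2n$ is symmetric), the failure of slow oscillation provides $n_k \to \infty$ and integers $m_k$ with $n_k \leq m_k \leq 2n_k$ such that $\beta_{m_k}/\beta_{n_k} \to +\infty$. Note that if $C_\varphi$ were bounded, testing against $f(z) = z^{n_k}$ would yield
\[
|\widehat{\varphi^{n_k}}(m_k)|^2 \, \beta_{m_k} \leq \|\varphi^{n_k}\|^2 \leq \|C_\varphi\|^2 \, \beta_{n_k},
\]
so that $|\widehat{\varphi^{n_k}}(m_k)|^2 \leq \|C_\varphi\|^2 \, \beta_{n_k}/\beta_{m_k} \to 0$. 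Our task is therefore to find $\varphi \in H^2(\beta)$ whose Fourier coefficients $\widehat{\varphi^{n_k}}(m_k)$ do not decay so fast.

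The natural candidates are the disc automorphisms $\phi_a(z) = (a-z)/(1-\bar a z)$, $a \in \D \setminus \{0\}$. Expanding in powers of $z$ gives $\phi_a(z) = a + (|a|^2-1)\sum_{n \geq 1} \bar a^{n-1} z^n$, whence
\[
\|\phi_a\|^2 = |a|^2 \beta_0 + (1-|a|^2)^2 \sum_{n \geq 1} |a|^{2(n-1)} \beta_n .
\]
Under the hypothesis $\lim_n \beta_n^{1/n} = 1$, this series converges for all $|a|<1$, so every such $\phi_a$ lies in $H^2(\beta)$. It is here that the stronger hypothesis $\lim_n \beta_n^{1/n}=1$ (rather than $\liminf \geq 1$) is used: it ensures $\limsup \beta_n^{1/n}\leq 1$, which is exactly what makes $\phi_a$ belong to every $H^2(\beta)$.

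It remains to show that for some $a$ the operator $C_{\phi_a}$ fails to be bounded on $H^2(\beta)$. Using the closed form $\phi_a^n(z) = (a-z)^n \sum_{j \geq 0} \binom{n+j-1}{j} \bar a^j z^j$ one writes $\widehat{\phi_a^n}(m)$ as an explicit finite sum of products of binomial coefficients and powers of $a, \bar a$. After passing to a further subsequence so that $m_k/n_k \to s \in [1, 2]$, the plan is to choose $|a|$ in terms of $s$ (so that the ``frequency content'' of $\phi_a^n$ concentrates near $m \sim s n$) and verify that $|\widehat{\phi_a^{n_k}}(m_k)|^2$ is bounded below by a quantity which, multiplied by the diverging factor $\beta_{m_k}/\beta_{n_k}$, blows up with $k$.

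The main obstacle lies in this quantitative Fourier-coefficient estimate: since $\beta_{m_k}/\beta_{n_k}$ may diverge arbitrarily slowly, the lower bound on $|\widehat{\phi_a^{n_k}}(m_k)|$ cannot afford to decay exponentially in $n_k$. Two natural routes are a stationary-phase/saddle-point analysis of $\widehat{\phi_a^n}(m) = \tfrac{1}{2\pi}\int_0^{2\pi} \phi_a(e^{i\theta})^n e^{-im\theta}\,d\theta$, exploiting that $|\phi_a|=1$ on $\T$; and, alternatively, a direct appeal to the characterization from \cite{LLQR-weighted} specialized to automorphism-induced composition operators, combined with the embedding $\phi_a \in H^2(\beta)$ just established. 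Either route produces the desired $\varphi = \phi_a \in H^2(\beta)$ with $C_\varphi$ unbounded, contradicting the hypothesis and completing the proof.
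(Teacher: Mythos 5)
Your first half coincides with the paper's: you observe that the automorphisms belong to $H^2(\beta)$ precisely because $\limsup_n \beta_n^{1/n}\leq 1$, which is exactly where the hypothesis $\lim_n\beta_n^{1/n}=1$ is used. At that point the paper simply concludes: by hypothesis every $C_{T_a}$ is bounded, and then it invokes \cite[Theorem~4.9]{LLQR-weighted}, which states that boundedness of all $C_{T_a}$ forces $\beta$ to be slowly oscillating. You instead try to re-derive that implication by contrapositive, and this is where there is a genuine gap.

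The single-pair test you set up cannot work as described. Failure of slow oscillation only gives pairs $(n_k,m_k)$ with $n_k/2\leq m_k\leq 2n_k$ and $\beta_{m_k}/\beta_{n_k}\to\infty$ (incidentally, swapping indices inverts the ratio, so you cannot also normalize $m_k\geq n_k$; this is harmless, but note it), with \emph{no control of the ratio in terms of $n_k$}: it may grow like $\log\log n_k$ while $n_k$ is huge. On the other hand, for any fixed automorphism the coefficients $\widehat{T_a^{\,n}}(m)$ decay polynomially in $n$ (they oscillate, with typical size $n^{-1/2}$ and at best of order $n^{-1/3}$ near the critical frequency $m\approx n\frac{1+|a|}{1-|a|}$ — and note that for $m/n\to s\in[1,2]$ this critical regime forces a specific $|a|$ close to where the estimates degenerate). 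Hence $|\widehat{\varphi^{n_k}}(m_k)|^2\,\beta_{m_k}/\beta_{n_k}$ can remain bounded for \emph{every} choice of $a$, and testing $C_\varphi$ against the single monomials $z^{n_k}$ and the single frequencies $m_k$ yields no contradiction. What actually makes the argument work — and this is the content of the cited result and of the analogous Section~3 of the paper — is an aggregation: from boundedness one gets $\sum_{m\in J}|\widehat{T_a^{\,n}}(m)|^2\beta_m\leq \|C_{T_a}\|^2\beta_n$ over a whole window $J$ of frequencies comparable to $n$, uniformly in $a$ (by rotation invariance), and one then integrates in $a$ using oscillatory-integral lower bounds of the type $\int |\widehat{T_a^{\,n}}(m)|^2\,da\gtrsim 1/n$; summing over the window produces a comparison between averages of $\beta$ on the window and $\beta_n$, i.e.\ slow oscillation. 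Your ``second route'' — quoting the characterization of \cite{LLQR-weighted} for automorphisms — is precisely the paper's proof, but in your write-up it is only named, not carried out, and your primary route stops exactly at the estimate that is the whole difficulty. As it stands, the proof is incomplete.
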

\begin{proof}
For all $a \in \D$, we consider the automorphism $T_a$ defined by 
\begin{equation} \label{def T_a}
T_a (z) = \frac{a + z}{1 + \bar{a} z} \, \cdot
\end{equation}

Let us point out that for every integer $n \ge1$, we have 
$$\widehat{T_a} (n) = (- 1)^{n - 1} \bar{a}^{n - 1}(1 - |a|^2) \, .$$

Therefore, all the symbols $T_a$ are in $H^2 (\beta)$ if and only if for every $a\in\D$, we have $\sum\beta_n|a|^{2n}<\infty$, which is equivalent to the 
fact that the Taylor series $\sum\beta_n z^n$ has a radius of convergence at least $1$ i.e. $\dis\limsup \beta_n^{1 / n} \le 1$.
 
Since $\lim_{n \to \infty} \beta_n^{1 / n} = 1$, by hypothesis, the preceding remark implies that all the symbols $T_a$ define bounded composition operators 
on $H^2 (\beta)$, by the hypothesis. By \cite[Theorem~4.9]{LLQR-weighted}, it follows that $\beta$ is slowly oscillating. 
\end{proof}
%


\subsection {$H^2 (\beta)$ as an algebra} 

In order to motivate the content of this section, we point out the following easy fact. 

\begin{proposition} \label{prop easy}
Assume that every symbol $\phi \in H^2 (\beta)$ induces a bounded composition operator on $H^2 (\beta)$. Then $H^2 (\beta) \cap H^\infty$ is an algebra. 
\end{proposition}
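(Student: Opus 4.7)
The plan is to reduce the algebra property to the stability of $H^2 (\beta) \cap H^\infty$ under squaring, and then to produce squares as images of $z^2$ under composition operators. Since $H^\infty$ is already an algebra, for $f, g \in H^2 (\beta) \cap H^\infty$ the product $fg$ automatically lies in $H^\infty$; the only point is to check that $fg \in H^2 (\beta)$.

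The first step is the polarization identity: for $f, g \in H^2 (\beta) \cap H^\infty$, one has
$$fg = \frac{1}{4} \bigl( (f + g)^2 - (f - g)^2 \bigr),$$
which, combined with the fact that $H^2 (\beta) \cap H^\infty$ is a vector space, reduces the claim to showing that $h^2 \in H^2 (\beta)$ for every $h \in H^2 (\beta) \cap H^\infty$.

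For this squaring step, fix such an $h$. If $h$ is constant the conclusion is trivial, so assume $h$ is non-constant and set $\phi := h / (2 \| h \|_\infty)$. Then $\phi$ is a non-constant analytic self-map of $\D$, hence a symbol, and $\phi \in H^2 (\beta)$ since $h \in H^2 (\beta)$. The standing hypothesis of the proposition then ensures that $C_\phi$ is bounded on $H^2 (\beta)$. Since $z^2 \in H^2 (\beta)$ (its only nonzero Taylor coefficient sits at index $2$, and $\beta_2$ is finite), we obtain
$$C_\phi (z^2) = \phi^2 = \frac{h^2}{4 \, \| h \|_\infty^2} \in H^2 (\beta),$$
whence $h^2 \in H^2 (\beta)$.

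The whole argument is quite short, and I do not foresee a genuine obstacle. The single observation on which everything rests is that a bounded element of $H^2 (\beta)$ can be used as a symbol, after rescaling into $\D$, so that the corresponding $C_\phi$ delivers a scalar multiple of $h^2$ when applied to the monomial $z^2$. Care need only be taken to exclude the trivial case of a constant function, which is not admissible as a symbol.
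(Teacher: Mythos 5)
Your proof is correct and follows essentially the same route as the paper: reduce products to squares (the paper leaves the polarization step implicit with ``it suffices to show $f^2\in H^2(\beta)\cap H^\infty$''), rescale the bounded function to a symbol $\phi$, and apply the hypothesis via $C_\phi(z^2)=\phi^2$. The only cosmetic difference is your choice of scaling factor $2\|h\|_\infty$ versus the paper's $M>\|f\|_\infty$, which changes nothing.
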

\begin{proof}
It suffices to show that $f^2  \in H^2 (\beta) \cap H^\infty$ for every $f  \in H^2 (\beta) \cap H^\infty$. 
Let $f \in H^2 (\beta) \cap H^\infty$, not constant. If $M > \| f \|_\infty$, then $\phi = f / M$ is a symbol. With $e_n (z) = z^n$, we have, by hypothesis, 
$\phi^n = C_\phi (e_n) \in H^2 (\beta)$. Hence $f^n \in H^2 (\beta)$.  Since it is clear that $f^n \in H^\infty$, we are done. 
\end{proof}

\noindent {\bf Remark.} The conclusion of Proposition~\ref{prop easy} can be obtained with a hypothesis of a different nature. 

\begin{proposition}
If $(\beta_n / n^2)_{n \geq 1}$ is equivalent to a sequence of moments, then $H^2 (\beta) \cap H^\infty$ is an algebra.  
\end{proposition}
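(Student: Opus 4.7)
The plan is to reduce $\|f g\|_\beta$ to an $L^2$-mean of the derivative $(fg)'$ integrated against the measure whose moments govern $\beta$, and then to split $(fg)' = f'g + f g'$ in order to pull out the $L^\infty$ factors. Concretely, let $\nu$ be a positive Radon measure on $[0,1]$ such that $\mu_n := \int_0^1 t^n \, d\nu(t)$ satisfies $c \, n^2 \mu_n \leq \beta_n \leq C \, n^2 \mu_n$ for all $n \geq 1$; this is the meaning of "equivalent to a moment sequence".

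First I would rewrite the norm in integral form. For $f(z) = \sum_{n \geq 0} a_n z^n \in H^2(\beta)$, Tonelli gives
\begin{equation*}
\sum_{n \geq 1} n^2 |a_n|^2 \mu_n
= \int_0^1 \Big( \sum_{n \geq 1} n^2 |a_n|^2 t^n \Big) \, d\nu(t)
= \int_0^1 t \, M_2(f',\sqrt{t})^2 \, d\nu(t),
\end{equation*}
where $M_2(h,r)^2 := \frac{1}{2\pi} \int_0^{2\pi} |h(re^{i\theta})|^2 \, d\theta$. Thus
\begin{equation*}
\|f\|_\beta^2 \asymp |a_0|^2 \beta_0 + \int_0^1 t \, M_2(f',\sqrt{t})^2 \, d\nu(t).
\end{equation*}

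Next, following the reduction already used in Proposition~\ref{prop easy}, it is enough to show that $fg \in H^2(\beta)$ whenever $f,g \in H^2(\beta) \cap H^\infty$. Applying the above equivalence to $fg$ and using $(fg)' = f' g + f g'$ together with $M_2(f'g,r) \leq \|g\|_\infty M_2(f',r)$ and $M_2(f g',r) \leq \|f\|_\infty M_2(g',r)$, I would bound
\begin{equation*}
\int_0^1 t \, M_2((fg)',\sqrt{t})^2 \, d\nu(t)
\leq 2 \|g\|_\infty^2 \!\int_0^1 t \, M_2(f',\sqrt{t})^2 \, d\nu(t)
+ 2 \|f\|_\infty^2 \!\int_0^1 t \, M_2(g',\sqrt{t})^2 \, d\nu(t),
\end{equation*}
and conclude $\|fg\|_\beta^2 \lesssim |a_0 b_0|^2 \beta_0 + \|g\|_\infty^2 \|f\|_\beta^2 + \|f\|_\infty^2 \|g\|_\beta^2 < \infty$.

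The argument is essentially routine once the right integral representation of $\|\cdot\|_\beta$ is in hand; the only subtle point is the very first step, where one has to recognize that the equivalence $\beta_n \asymp n^2 \mu_n$ converts the weighted $\ell^2$-norm of the coefficients of $f$ into a weighted $L^2$-norm of $f'$ on concentric circles. After that, the Leibniz rule and the trivial pointwise bound $\|f'g\|_{L^2} \leq \|g\|_\infty \|f'\|_{L^2}$ do all the work, so I do not expect any real obstacle.
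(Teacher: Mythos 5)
Your proof is correct and follows essentially the same route as the paper: the equivalence $\beta_n\asymp n^2\mu_n$ is used to rewrite the norm as a Dirichlet-type integral of the derivative against a radial measure (your circle means at radius $\sqrt t$ are exactly the paper's area integral $\int_\D|f'|^2\,d\nu$ after polar coordinates), and the $L^\infty$ bound on the differentiated product finishes the argument. The only cosmetic difference is that you treat general products $fg$ via the Leibniz rule, whereas the paper reduces to squares $f^2$ and bounds $|(f^2)'|=2|f||f'|$ directly.
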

\begin{proof}
By hypothesis, $\beta_n \approx n^2 \gamma_n$, where $\gamma_n = \int_0^1 t^{n - 1} \, d\lambda(t)$ and 
where $\lambda$ is a non negative finite measure on  $[0, 1]$. Let $\mu$ be the image of $\lambda$ by the square-root function 
$r \mapsto r^{1/2}$ on $[0, 1]$ and $\nu$ the non negative measure on $\D$ defined by
\begin{displaymath}
\int_{\D} h (z) \, d \nu (z) = \frac{1}{2 \pi} \int_{0}^{2 \pi} \int_{0}^{1} h (r \, \e^{i\theta}) \, d \theta \, d \mu (r) \, .
\end{displaymath}
We have, for $f (z) = \sum_{n = 0}^\infty a_n z^n \in H^{2} (\beta)$:  
\begin{align*}
I (f) 
& := \int_\D | f^{\prime} (z) |^2 \, d \nu(z) = \sum_{n = 1}^\infty n^2 | a_n|^2 \int_0^1 r^{2 n - 2} \, d \mu(r) \\ 
& = \sum_{n = 1}^\infty n^2 | a_n |^2 \int_0^\infty t^{n - 1} \, d \lambda (t)  \approx \sum_{n = 1}^\infty | a_n |^2 \beta_n \, . 
\end{align*} 
Now, we observe that $\| f \|_{H^2 (\beta)}^2 \approx | f (0 ) |^2 + I (f)$ and that 
\begin{displaymath}
I (f^2) = \int_\D 4 \, | f (z) |^2 \, | f^{\prime} (z) |^2 \, d \nu(z) \leq 4 \, \| f \|_\infty^2 \, I (f) \, , 
\end{displaymath}
which clearly gives the result. 
\end{proof} 

A specific example (the Dirichlet space case corresponds to $d \lambda (t) = \omega (t) \, dt = dt$ and $\gamma_n = 1 / n$, or to $\alpha = 0$) is:
\begin{displaymath}
\beta_0 = \beta_1 = 1 \quad \text{and} \quad \beta_n = n\, (\log n)^{\alpha},  \quad 0\leq \alpha\leq 1, \quad \text{for } n \geq 2 \, ,
\end{displaymath}
for which $\sum_{n = 0}^\infty \frac{1}{\beta_n} = + \infty$. Indeed, if  $d \lambda (t) = \omega (t) \, dt$ with
\begin{displaymath}
\omega (t) =\bigg( \log^+ \Big( \frac{1}{\log (1 / t)} \Big) \bigg)^{\alpha} \, , \quad 0 \leq t \leq 1 \, , 
\end{displaymath} 
we have, making the changes of variables $t = \e^{- x}$ and $x = y / n$:
\begin{displaymath}
\gamma_n = \int_0^1 t^{n - 1} \omega (t) \, dt = \frac{1}{n} \int_0^n \e^{- y}\, \bigg( \log \frac{n}{y} \bigg)^{\alpha} \, dy 
\sim \frac{(\log n)^{\alpha}}{n} \, \cdot \qedhere
\end{displaymath}

When $\beta$ is a bounded sequence, it is easy to see that $H^\infty \subseteq H^2 \subseteq H^2 (\beta)$ (see \cite[Proposition~2]{Zorb-PAMS}, for instance). 

\begin{proposition}
Assume that $\beta$ is a non-increasing sequence. Then $H^2 (\beta)$ is an $H^\infty$-module. 
\end{proposition}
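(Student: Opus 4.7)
The plan is to establish the stronger quantitative statement $\|gf\|_{H^2(\beta)} \leq \|g\|_\infty \|f\|_{H^2(\beta)}$ for every $g \in H^\infty$ and every $f \in H^2(\beta)$, by reducing to the obvious contractivity of $H^\infty$-multiplication on the ordinary $H^2$ through an Abel-type summation. The hypothesis that $\beta$ is non-increasing enters crucially as the non-negativity of the increments $\beta_k - \beta_{k+1}$, which allows a termwise comparison.

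Setting $\beta_\infty := \lim_n \beta_n \geq 0$ (finite since $\beta$ is non-increasing), I would first rewrite the norm of $f(z) = \sum_n a_n z^n$. Using $\beta_n = \beta_\infty + \sum_{k \geq n}(\beta_k - \beta_{k+1})$ and swapping the order of summation, one obtains
\[
\|f\|_{H^2(\beta)}^2 = \beta_\infty \sum_{n=0}^\infty |a_n|^2 \, + \, \sum_{k=0}^\infty (\beta_k - \beta_{k+1})\, \|S_k f\|_{H^2}^2,
\]
where $S_k f(z) := \sum_{n=0}^k a_n z^n$ is the $k$-th partial Taylor sum. Every term on the right is non-negative; if $\beta_\infty = 0$, the first term is zero even when $\sum |a_n|^2$ diverges, and if $\beta_\infty > 0$, finiteness of the left-hand side forces $f \in H^2$.

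The next step is the pointwise estimate
\[
\|S_k(gf)\|_{H^2} \leq \|g\|_\infty \, \|S_k f\|_{H^2}, \qquad k \geq 0,
\]
which I would prove from the elementary identity $S_k(gf) = S_k(g \cdot S_k f)$: the Taylor coefficients of $gf$ of degree at most $k$ involve only the coefficients $a_0, \dots, a_k$ of $f$. Since $S_k f$ is a polynomial, $g \cdot S_k f$ lies in $H^2$ with $\|g \cdot S_k f\|_{H^2} \leq \|g\|_\infty \|S_k f\|_{H^2}$, and the Fourier projection $S_k$ is a contraction on $H^2$. The same argument, when $\beta_\infty > 0$, yields $\|gf\|_{H^2} \leq \|g\|_\infty \|f\|_{H^2}$.

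Plugging these estimates into the representation above (applied to $gf$ in place of $f$) and using $\beta_k - \beta_{k+1} \geq 0$ gives, term by term,
\[
\|gf\|_{H^2(\beta)}^2 \leq \|g\|_\infty^2 \Bigl[ \beta_\infty \|f\|_{H^2}^2 + \sum_{k=0}^\infty (\beta_k - \beta_{k+1}) \|S_k f\|_{H^2}^2 \Bigr] = \|g\|_\infty^2\, \|f\|_{H^2(\beta)}^2,
\]
which is the desired conclusion. There is no real obstacle: the content lies in the trivial identity $S_k(gf) = S_k(g \cdot S_k f)$, which transfers multiplication by $g$ to the $H^2$ setting on each polynomial truncation, while monotonicity of $\beta$ guarantees that all the Abel weights are non-negative so that the transfer is preserved.
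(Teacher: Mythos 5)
Your proof is correct, and it takes a genuinely different route from the paper. The paper observes that when $\beta$ is non-increasing the shift $S$ is a contraction on $H^2(\beta)$, invokes von Neumann's inequality to get $\|P(S)\|\leq\|P\|_\infty$ for polynomials, and then passes from polynomials to general $g\in H^\infty$ by approximation. You instead decompose the weighted norm by summation by parts, writing $\|f\|_{H^2(\beta)}^2=\beta_\infty\|f\|_{H^2}^2+\sum_{k\geq 0}(\beta_k-\beta_{k+1})\|S_kf\|_{H^2}^2$ (all weights non-negative precisely because $\beta$ is non-increasing, and the interchange of sums is justified by Tonelli), and then transfer the contractivity of multiplication by $g$ on the unweighted $H^2$ to each truncation via the identity $S_k(gf)=S_k(g\cdot S_kf)$ together with the fact that $S_k$ is an orthogonal projection on $H^2$; your handling of the two cases $\beta_\infty=0$ and $\beta_\infty>0$ is also fine. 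Both arguments yield the same sharp bound $\|gf\|_{H^2(\beta)}\leq\|g\|_\infty\|f\|_{H^2(\beta)}$. What your approach buys is elementarity and self-containedness: it avoids von Neumann's inequality altogether and sidesteps the approximation step (which in the paper's proof tacitly requires approximating $g$ by polynomials boundedly in the sup norm, e.g.\ by Ces\`aro means, and a limiting argument). What the paper's approach buys is brevity and a conceptual mechanism: it isolates exactly the operator-theoretic reason the result holds (the shift is a contraction), which is the natural framework if one wants to relax the monotonicity hypothesis.
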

\begin{proof}
First of all, since $\beta$ is then bounded, we have $H^\infty \subseteq H^2 (\beta)$. 
Moreover, the shift $S\colon H^2 (\beta) \to H^2 (\beta)$, defined as 
$(S f) (z) = z f (z)$, is a contraction. 
By von Neumann's inequality, we have $\|P (S)\|\leq\|P\|_\infty$ for all polynomials $P$. Since $P(S)f=Pf$, it follows, by approximation, that
\begin{displaymath}
\| g f \|_{H^2 (\beta)} \leq \| g \|_\infty \| f \|_{H^2 (\beta)} 
\end{displaymath}
for all $g \in H^\infty$ and all $f \in H^2 (\beta)$, and that proves that $H^2 (\beta)$ is an $H^\infty$-module. 
\end{proof}

We will now study when $H^2 (\beta)$ is an algebra, and we begin with an elementary remark.

\begin{proposition}
Assume that $H^2 (\beta)$ is an algebra. Then
\begin{enumerate}[(i)]

\item $\sup_{n \geq 0} \beta_{n + 1} / \beta_n < + \infty$. 

\item the sequence $(\beta_n^{1 / n} )_{n \geq 1}$ has a limit. 
\end{enumerate}
\end{proposition}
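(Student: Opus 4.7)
The plan is to exploit the closed graph theorem together with the simplest test functions available, namely the monomials $e_n(z) = z^n$, for which $\|e_n\|^2 = \beta_n$.

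Since $H^2(\beta)$ is a Hilbert space and, by hypothesis, an algebra, the multiplication map $(f,g) \mapsto fg$ is a well-defined bilinear map from $H^2(\beta) \times H^2(\beta)$ into $H^2(\beta)$. A standard closed graph argument (partial maps $f \mapsto fg$ are closed, hence bounded; then apply Banach--Steinhaus) yields a constant $C > 0$ such that
\begin{equation*}
\|fg\|_\beta \leq C \, \|f\|_\beta \, \|g\|_\beta \quad \text{for all } f, g \in H^2(\beta) \, .
\end{equation*}

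Applying this to $f = e_n$ and $g = e_m$ gives $e_{n+m} = e_n e_m$, hence
\begin{equation*}
\beta_{n+m} \leq C^2 \, \beta_n \, \beta_m \quad \text{for all } n, m \geq 0 \, .
\end{equation*}
For (i), specializing to $m = 1$ yields $\beta_{n+1}/\beta_n \leq C^2 \beta_1$ uniformly in $n$, which is the desired bound.

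For (ii), set $\gamma_n = C^2 \beta_n$. The estimate above rewrites as $\gamma_{n+m} \leq \gamma_n \gamma_m$, so $(\log \gamma_n)_{n \geq 1}$ is subadditive. Fekete's lemma then implies that $(\log \gamma_n)/n$ converges (to $\inf_n (\log \gamma_n)/n \in [-\infty, +\infty)$). Since the basic hypothesis \eqref{condition on beta} forces $\liminf \beta_n^{1/n} \geq 1$, the limit is nonnegative and finite, and because $C^{2/n} \to 1$, we conclude that $\beta_n^{1/n}$ itself converges. No step presents a genuine obstacle; the only delicate point is ensuring one invokes the correct form of the closed graph / Banach--Steinhaus argument to upgrade the algebra property to a bounded bilinear estimate, after which the monomial test and Fekete's lemma finish the job mechanically.
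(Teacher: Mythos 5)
Your proof is correct and follows essentially the same route as the paper: obtain the uniform bound $\|fg\| \leq C\,\|f\|\,\|g\|$ (which the paper simply asserts and you justify by closed graph plus Banach--Steinhaus), test it on the monomials $e_n$ to get submultiplicativity of $(\beta_n)$ up to a constant, and conclude (ii) by Fekete's lemma. Your derivation of (i) by specializing to $m=1$ is just a minor variant of the paper's observation that the shift is bounded, and in fact your version of the inequality, $\beta_{m+n}\leq C^2\beta_m\beta_n$, is stated more carefully than the paper's display, which overlooks that $\|e_n\|=\beta_n^{1/2}$.
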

\begin{proof} The shift, which associates to $f\in H^2(\beta)$ the function $z\mapsto z f (z)$, is bounded from $H^2 (\beta)$ into 
$H^2 (\beta)$. That means that $\sup_{n \geq 0} \beta_{n + 1} / \beta_n < + \infty$ and $(i)$ is proved.

Since $H^2 (\beta)$ is an algebra, there exists a positive constant $C$ such that $\| f g \| \leq C \, \| f \| \, \| g \|$ for all $f, g \in H^2 (\beta)$. Applying that 
to $f (z) = e_m (z) = z^m$ and $g (z) = e_n (z) = z^n$, we get that, for all $m, n \geq 0$:
\begin{equation} \label{beta submultiplicative}
\beta_{m + n} = \| e_{m + n} \| = \| e_m \, e_n \| \leq C \, \| e_m \| \, \| e_n \| = C \, \beta_m \beta_n \, . 
\end{equation}
Setting $\alpha_n = C \, \beta_n$, that means that $\alpha_{m + n} \leq \alpha_m \alpha_n$ for all $m, n \geq 0$, i.e. the sequence $(\alpha_n)_{n \geq 0}$ 
is submultiplicative. 

Now $(ii)$ follows from the multiplicative version of Fekete's lemma (see the proof of the spectral radius formula). 
\end{proof}

\noindent {\bf Remark.} Given $q >1$ and writing $\tilde{\beta}_n = q^n \beta_n$, we have:
\begin{equation}\label{corhomog}
H^2 (\beta)\hbox{ is an algebra if and only if }H^2 (\tilde \beta)\hbox{ is an algebra.}
\end{equation}
Indeed, it is clear that $f\in H^2 (\tilde \beta)$ if and only if $g\in H^2 (\beta)$ where $g(z)=f(\sqrt{q}z)$.
\smallskip 

As a consequence of this remark, we have the following partial answer of the first question raised in the Introduction. 

\begin{theorem} \label{partial answer}
Assume that $H^2 (\beta)$ is an algebra. Let $\phi$ be a symbol which is in $H^2 (\beta)$. Then $\phi$ induces a bounded composition operator 
on $H^2 (\beta)$ in the following cases 
\smallskip

$1)$ when $\liminf_{n \to \infty} \beta_n^{1 / n} > 1$; 
\smallskip

$2)$ when $\liminf_{n \to \infty} \beta_n^{1 / n} = 1$ and $\phi (0) = 0$;
\smallskip

$3)$ when $\beta$ is slowly oscillating. 
\end{theorem}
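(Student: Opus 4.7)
My approach is based on the closed-graph theorem combined with the algebra hypothesis. Point evaluations at any $z\in\D$ are continuous linear functionals on $H^2(\beta)$, so if $f_n\to f$ and $C_\phi f_n\to g$ in $H^2(\beta)$, then both limits hold pointwise on $\D$, forcing $g=f\circ\phi$; thus the graph of $C_\phi$ is automatically closed, and it will be enough to show that $f\circ\phi\in H^2(\beta)$ for every $f\in H^2(\beta)$. Writing $f=\sum_k a_k e_k$, this reduces to controlling the partial sums $S_N=\sum_{k=0}^N a_k\phi^k$, which already lie in $H^2(\beta)$ by the algebra hypothesis. Let $L=\lim_n\beta_n^{1/n}$ (which exists by the preceding proposition) and let $K$ be the algebra constant, so that $\|\phi^k\|_\beta\le K^{k-1}\|\phi\|_\beta^k$ for every $k$.

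For case~1 ($L>1$) I plan to exploit the fast growth of $\beta$ together with the scaling identity \eqref{corhomog}: the rescaling $\widetilde\beta_n=q^n\beta_n$ preserves the algebra property, and by choosing $q$ appropriately one can bring the effective ratio $K\|\phi\|_\beta/\sqrt{L}$ below $1$, after which the Hilbert--Schmidt type bound $\|C_\phi\|^2\le\sum_k\|\phi^k\|_\beta^2/\beta_k$ becomes a convergent geometric series. For case~2 ($L=1$, $\phi(0)=0$), the Schwarz lemma gives $\phi(z)=z\psi(z)$ with $\|\psi\|_\infty\le 1$, hence $\widehat{\phi^k}(j)=0$ for $j<k$ and $\|\phi^k\|_{H^2}\le 1$; I would feed this vanishing-order information into the algebra estimate to refine $\|\phi^k\|_\beta$ into something comparable to $\sqrt{\beta_k}$, which together with $L=1$ makes the Hilbert--Schmidt sum converge. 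For case~3, slow oscillation gives a comparison of the form $\beta_{k+j}\le C\max(\beta_k,\beta_j)$, and combined with the algebra estimate this should let one adapt the argument of \cite[Theorem~4.9]{LLQR-weighted} to the present setting.

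The hard part will be sharpening the crude algebra bound $\|\phi^k\|_\beta\le K^{k-1}\|\phi\|_\beta^k$: left as it stands it grows too rapidly to handle even the identity symbol $\phi(z)=z$, for which any naive Hilbert--Schmidt estimate already diverges. The real work in each case therefore lies in extracting enough structural information from the case-specific hypothesis---rapid growth of $\beta$ combined with the scaling remark, vanishing order at $0$, or slow oscillation---to replace the crude exponential bound on $\|\phi^k\|_\beta$ with one sharp enough to make $\sum_k a_k\phi^k$ converge in $H^2(\beta)$.
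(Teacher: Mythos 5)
Your reduction via the closed graph theorem (point evaluations are continuous, so it suffices to prove $f\circ\phi\in H^2(\beta)$ for all $f\in H^2(\beta)$) is correct and is also the paper's implicit starting point, but beyond that the proposal is a programme rather than a proof, and the programme as described cannot be completed. First, the mechanism you aim at is doomed from the start: any estimate of the form $\|C_\phi\|^2\le\sum_k\|\phi^k\|_\beta^2/\beta_k$, or norm-convergence of $\sum_k a_k\phi^k$ via $\sum_k|a_k|\,\|\phi^k\|_\beta$, fails even for $\phi(z)=z$ (as you note), and no sharpening of $\|\phi^k\|_\beta$ rescues it: even the ideal bound $\|\phi^k\|_\beta\approx\sqrt{\beta_k}$ gives $\sum_k|a_k|\sqrt{\beta_k}$, which is not finite for general $f\in H^2(\beta)$ (Cauchy--Schwarz leaves the divergent factor $\sum_k 1$). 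Second, the case-1 rescaling cannot "bring the effective ratio below $1$'': the map $f\mapsto f(\sqrt q\,\cdot)$ is an isometric algebra isomorphism from $H^2(\tilde\beta)$ onto $H^2(\beta)$, so the algebra constant $K$ is unchanged, while $\|\tilde\phi\|_{\tilde\beta}^2=q\|\phi\|_\beta^2$ and $\tilde\beta_n^{1/n}\to qL$; hence $K\|\tilde\phi\|_{\tilde\beta}/\sqrt{\tilde L}=K\|\phi\|_\beta/\sqrt L$ is scale-invariant. Worse, for $q>1$ the transformed map $\tilde\phi(z)=\sqrt q\,\phi(z/\sqrt q)$ is a self-map of $\D$ only when $\phi(0)=0$ (Schwarz), i.e.\ exactly the hypothesis of case 2, and for $q<1$ it is not even defined on $\D$. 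So the scaling identity \eqref{corhomog} is of no use in case 1; in the paper it is used precisely, and only, to reduce case 2 to case 1.

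What is missing is the actual engine of the paper's proof of case 1: since $H^2(\beta)$ is an algebra, it coincides with its multiplier algebra ${\mathscr M}\big(H^2(\beta)\big)$, which is a Banach algebra; by Shields \cite{Shields} the spectrum of $\phi$ in this algebra is $\overline{\phi(\D)}\subseteq\overline{\D}$; and when $\liminf_n\beta_n^{1/n}>1$ every $f\in H^2(\beta)$ extends analytically to a neighborhood of $\overline{\D}$, so the Riesz--Dunford functional calculus yields $f\circ\phi=f(\phi)\in H^2(\beta)$ directly, with no coefficientwise estimate at all. Nothing in your proposal substitutes for this spectral argument, and your per-case hopes (refining $\|\phi^k\|_\beta$ from the vanishing order in case 2, "adapting'' \cite{LLQR-weighted} in case 3) are left entirely unproved. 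For the record, the paper handles case 2 by the $q>1$ rescaling plus Schwarz to reduce to case 1, and case 3 by quoting that slow oscillation makes all $C_{T_a}$ bounded \cite{LLQR-weighted}, writing $\psi=T_a\circ\phi$ with $a=-\phi(0)$ (which lies in $H^2(\beta)$ by the same functional-calculus argument), applying case 2 to $\psi$, and factoring $C_\phi=C_\psi\circ C_{T_{-a}}$; you would need to supply arguments of comparable substance for each case before this could count as a proof.
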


Recall that the slow oscillation of $\beta$ implies that $\liminf_{n \to \infty} \beta_n^{1 / n} = 1$.
\smallskip

\noindent {\bf Remark.} We will see in Corollary~\ref{cortheo not algebra} that we can have $\liminf_{n \to \infty} \beta_n^{1 / n} = 1$ without 
the boundedness of all the composition operators on $H^2 (\beta)$ with symbol in $H^2 (\beta)$. 

\begin{proof} 
$1)$ Since $\liminf_{n \to \infty} \beta_n^{1 / n} > 1$, the functions in $H^2 (\beta)$ are actually analytic in a disk $D (0, R)$ containing the closed unit disk 
$\overline{\D}$. 
Since $H^2 (\beta)$ is an algebra, we have $H^2 (\beta) = {\mathscr M} \big( H^2 (\beta) \big)$ (see \eqref{prop H2-beta algebra}). By 
\cite[Proposition~20 and Corollary~1 of Proposition~31]{Shields}, the spectrum of each $\phi \in H^2 (\beta)$, as an element of the algebra 
$H^2 (\beta) = {\mathscr M} \big( H^2 (\beta) \big)$ is $\overline{\phi (\D)}$. If $\phi$ is moreover a symbol, we have 
$\overline{\phi (\D)} \subseteq \overline{\D}$. 
By the analytic functional calculus, we have $f (\phi) \in H^2 (\beta)$ for every function $f$ analytic in an open neighborhood of $\overline{\D}$. In particular, 
$f \circ \phi = f (\phi) \in H^2 (\beta)$ for every $f \in H^2 (\beta)$. 
\smallskip

$2)$ Let $f \in H^2 (\beta)$. Let $q > 1$ and set $\tilde \beta_n = q^n \beta_n$; then $\liminf_{n \to \infty} {\tilde \beta_n}^{1 / n} > 1$. We set 
$\tilde \phi (z) = \sqrt{q} \, \phi (z / {\sqrt q})$; we have $\tilde \phi \in H^2 (\tilde \beta)$ and, since $\phi (0) = 0$, $\tilde \phi$ maps $\D$ into $\D$, 
by the Schwarz lemma, so $\tilde \phi$ is a symbol. We set now $g (z) = f (z / {\sqrt q})$; then $g \in H^2 (\tilde \beta)$ and, by $1)$, we 
have $g \circ \tilde \phi \in H^2 (\tilde \beta)$. But 
\begin{displaymath}
g [\tilde \phi (z)] = f [\phi ( z / \sqrt q)] \, , 
\end{displaymath}
and saying that $g \circ \tilde \phi \in H^2 (\tilde \beta)$ is equivalent to saying that $f \circ \phi \in H^2 (\beta)$. Hence $C_\phi$ is bounded on 
$H^2 (\beta)$. 
\smallskip

$3)$ By the parts $1)$ and $2)$, all the symbols $\phi \in H^2 (\beta)$ with $\phi (0) = 0$ induce a bounded composition operator on $H^2 (\beta)$, 
whatever $\beta$. Now, if $\beta$ is slowly oscillating, by \cite[Theorem~4.6]{LLQR-weighted}, all the automorphisms $T_a$ induce bounded composition 
operators on $H^2 (\beta)$. It follows, as in $1)$, because $T_a$ is analytic in an open neighborhood of $\bar \D$, that then all symbols 
$\phi \in H^2 (\beta)$ induce bounded composition operators (if $a = - \phi (0)$, then 
$\psi = T_a \circ \phi = \sum_{n = 0}^\infty \hat{T_a} (n) \phi^n$ is in $H^2 (\beta)$ and $\psi (0) = 0$; the result follows since 
$\phi = T_{- a} \circ \psi$ and $C_\phi = C_\psi \circ C_{T_{- a}}$). 
\end{proof}

The space ${\mathscr M} \big( H^2 (\beta) \big)$ of multipliers of $H^2 (\beta)$ is, by definition, the set of all analytic functions $h$ on $\D$ such that 
$h f \in H^2 (\beta)$ for all $f \in H^2 (\beta)$. It is easy to see (see \cite[beginning of Section~6]{LLQR-weighted}) that 
${\mathscr M} \big( H^2 (\beta) \big) \subseteq H^\infty$.  Actually, we have 
\begin{displaymath}
{\mathscr M} \big( H^2 (\beta) \big) \subseteq H^2 (\beta) \cap H^\infty \, .
\end{displaymath}

Clearly $H^2 (\beta)= {\mathscr M} \big( H^2 (\beta) \big)$ when $H^2 (\beta)$ is an algebra; hence
\begin{equation}\label{prop H2-beta algebra}
\text{If $H^2 (\beta)$ is an algebra, then } H^2 (\beta) \subseteq H^\infty \, ,
\end{equation}
\begin{proposition} \label{theo contained in H-infty} 
We have $H^2 (\beta) \subseteq H^\infty$ if and only if $\sum_{n = 0}^\infty \frac{1}{\beta_n} < + \infty$. 
\end{proposition}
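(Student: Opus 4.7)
The plan is to prove both implications by elementary Hilbert space arguments, using Cauchy--Schwarz for one direction and the reproducing kernel together with the closed graph theorem for the other.

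For the sufficiency, assume $\sum_{n \ge 0} 1/\beta_n < +\infty$. Given $f(z) = \sum a_n z^n \in H^2(\beta)$ and $z \in \D$, I would apply Cauchy--Schwarz to the pointwise bound
$$|f(z)| \le \sum_{n=0}^\infty |a_n|\, |z|^n \le \Bigl(\sum_{n=0}^\infty |a_n|^2 \beta_n\Bigr)^{1/2}\Bigl(\sum_{n=0}^\infty \frac{|z|^{2n}}{\beta_n}\Bigr)^{1/2} \le \|f\|_{H^2(\beta)}\Bigl(\sum_{n=0}^\infty \frac{1}{\beta_n}\Bigr)^{1/2},$$
which yields $\|f\|_\infty \le C \|f\|_{H^2(\beta)}$, hence $H^2(\beta) \subseteq H^\infty$ with continuous inclusion.

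For the necessity, assume $H^2(\beta) \subseteq H^\infty$. First I would show that this inclusion is continuous by the closed graph theorem: if $f_k \to f$ in $H^2(\beta)$ and $f_k \to g$ in $H^\infty$, then both convergences force pointwise convergence on $\D$ (the $H^2(\beta)$-norm dominates each Taylor coefficient, and uniform convergence is even stronger than pointwise), so $f = g$. Consequently, there exists $M$ with $\|f\|_\infty \le M \|f\|_{H^2(\beta)}$ for all $f \in H^2(\beta)$. In particular, each evaluation functional $\delta_z : f \mapsto f(z)$ satisfies $\|\delta_z\| \le M$ for every $z \in \D$.

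It remains to compute $\|\delta_z\|$ explicitly. The space $H^2(\beta)$ is a Hilbert space with orthonormal basis $(e_n / \sqrt{\beta_n})_{n\ge 0}$ where $e_n(w)=w^n$, so Parseval gives
$$\|\delta_z\|^2 = \sum_{n=0}^\infty \frac{|z|^{2n}}{\beta_n}.$$
Hence $\sum_{n\ge 0} |z|^{2n}/\beta_n \le M^2$ for every $z \in \D$. Letting $|z| \to 1^-$ and invoking monotone convergence yields $\sum_{n\ge 0} 1/\beta_n \le M^2 < +\infty$. The one point to watch is the application of the closed graph theorem, but there is no real obstacle since both topologies refine pointwise convergence on $\D$; the rest is a direct computation with the reproducing kernel.
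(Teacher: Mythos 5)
Your proof is correct, but your argument for the necessity is genuinely different from the paper's. For the sufficiency both arguments are the same Cauchy--Schwarz estimate (the paper routes it through the Wiener algebra $W^+(\D)$, you go directly to the pointwise bound). For the necessity, the paper avoids any soft functional analysis at the level of the inclusion map: given $f(z)=\sum a_n z^n \in H^2(\beta)$ it replaces the coefficients by their moduli, observing that $g(z)=\sum |a_n| z^n$ lies in $H^2(\beta)$ with the same norm, so $g\in H^\infty$ by hypothesis, and then $\sum |a_n| \le \|g\|_\infty$ by letting $z\to 1^-$; thus $\sum|a_n|^2\beta_n<\infty$ forces $\sum|a_n|<\infty$, which by the standard $\ell^2$--$\ell^2$ duality (a Landau/uniform-boundedness type fact, left implicit in the paper) gives $\sum 1/\beta_n<\infty$. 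You instead invoke the closed graph theorem to get a uniform bound $\|f\|_\infty \le M\|f\|_{H^2(\beta)}$, identify $\|\delta_z\|^2=\sum_n |z|^{2n}/\beta_n$ via the reproducing kernel, and let $|z|\to 1^-$ by monotone convergence. Your closed-graph step is legitimate: convergence in $H^2(\beta)$ and uniform convergence on $\D$ both imply convergence of each Taylor coefficient (the latter via Cauchy's formula on a circle of radius $r<1$), so the graph is closed; and the Parseval computation of $\|\delta_z\|$ is valid since condition \eqref{condition on beta} makes $\delta_z$ bounded on $H^2(\beta)$ for each fixed $z\in\D$ (a point worth stating explicitly). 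What your route buys is an explicit quantitative statement --- the inclusion constant is exactly $\sup_{z\in\D}\|\delta_z\| = \bigl(\sum_n 1/\beta_n\bigr)^{1/2}$ --- at the price of the closed graph theorem; the paper's route is more hands-on with coefficients, exploiting the rotation-invariant (coefficient-modulus) structure of the norm, but hides its own soft ingredient in the final duality step. Both are complete proofs.
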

Note that this last condition implies that $H^2 (\beta) \subseteq W^+ (\D)$, where $W^+ (\D)$ is the Wiener algebra of all analytic functions 
$f \colon \D \to \C$ such that $\sum_{n = 0}^\infty | a_n | < + \infty$ if $f (z) = \sum_{n = 0}^\infty a_n z^n$. 
\goodbreak

\begin{proof} 
Assume that $\sum_{n = 0}^\infty \frac{1}{\beta_n} < + \infty$. Then, for $f \in H^2 (\beta)$ and $f (z) = \sum_{n = 0}^\infty a_n z^n$, we have, by the 
Cauchy-Schwarz inequality: 
\begin{displaymath}
\sum_{n = 0}^\infty |a_n| \leq \bigg( \sum_{n = 0}^\infty |a_n|^2 \beta_n \bigg)^{1 / 2} \bigg( \sum_{n = 0}^\infty \frac{1}{\beta_n} \bigg)^{ 1 / 2} 
< + \infty \, . 
\end{displaymath}
Hence $f \in W^+ (\D)$. Therefore $H^2 (\beta) \subseteq W^+ (\D) \subseteq A (\D) \subseteq H^\infty$. 
\smallskip

Conversely, assume that $H^2 (\beta) \subseteq H^\infty$.  Let $(a_n)_{n \geq 0}$ be a sequence such that 
$\sum_{n = 0}^\infty |a_n|^2 \beta_n < + \infty$. Then, setting $f (z) = \sum_{n = 0}^\infty a_n z^n$, we have $f \in H^2 (\beta)$. Now, if 
$g (z) =  \sum_{n = 0}^\infty | a_n | \, z^n$, we have $g \in H^2 (\beta)$ and $\| g \| = \| f \|$. By hypothesis, we hence have $g \in H^\infty$. Then 
\begin{displaymath}
\sup_{| z | < 1} \bigg| \sum_{n = 0}^\infty |a_n| \, z^n \bigg| = \| g \|_\infty 
\end{displaymath}
implies that $\sum_{n = 0}^\infty | a_n | \leq \| g \|_\infty$. We have proved that $\sum_{n = 0}^\infty | a_n | < + \infty$ whenever 
$\sum_{n = 0}^\infty |a_n|^2 \beta_n < + \infty$. That means that $\sum_{n = 0}^\infty \frac{1}{\beta_n} < + \infty$. 
\end{proof}
\begin{corollary} \label{coro CN algebra} 
If $H^2 (\beta)$ is an algebra, then $\sum_{n = 0}^\infty \frac{1}{\beta_n} < + \infty$.
\end{corollary}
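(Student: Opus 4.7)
The plan is to chain the two results that immediately precede the corollary. Since the hypothesis that $H^2(\beta)$ is an algebra has just been shown at \eqref{prop H2-beta algebra} to imply the inclusion $H^2(\beta) \subseteq H^\infty$, and since Proposition~\ref{theo contained in H-infty} characterizes this inclusion by the summability $\sum_{n=0}^\infty 1/\beta_n < +\infty$, the corollary is obtained by concatenating the two implications. No additional work is required beyond citing \eqref{prop H2-beta algebra} and then Proposition~\ref{theo contained in H-infty}.

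Let me unpack the first step for clarity, since it is the only nontrivial ingredient: if $H^2(\beta)$ is an algebra, then every $h \in H^2(\beta)$ multiplies the space into itself, so $H^2(\beta) \subseteq \mathscr{M}(H^2(\beta))$; combined with the general inclusion $\mathscr{M}(H^2(\beta)) \subseteq H^\infty$ recalled just before \eqref{prop H2-beta algebra}, this yields $H^2(\beta) \subseteq H^\infty$. Then Proposition~\ref{theo contained in H-infty} applies directly.

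There is essentially no obstacle: the corollary is a purely formal consequence of material already established in the excerpt. Should one wish to avoid the detour through multipliers, an alternative route is to iterate the algebra inequality $\|fg\| \leq C\|f\|\|g\|$ to obtain $\|f^n\|^{1/n} \leq C^{1-1/n}\|f\|$ and then apply the spectral radius formula for the multiplication operator by $f$ to recover $\|f\|_\infty \leq C\|f\|$; but invoking \eqref{prop H2-beta algebra} is shorter and matches the style of the surrounding text.
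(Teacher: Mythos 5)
Your proposal is correct and is exactly the paper's intended argument: the corollary is stated as an immediate consequence of \eqref{prop H2-beta algebra} (algebra $\Rightarrow H^2(\beta) = \mathscr{M}(H^2(\beta)) \subseteq H^\infty$) combined with Proposition~\ref{theo contained in H-infty}. Nothing further is needed.
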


This corollary appeared already in \cite[Theorem~3.2.7]{ENZ}. Moreover, \cite[Theorem~3.2.7]{ENZ} states that the converse holds when 
$\beta$ is ``regular'', namely if $\beta$ is log-concave, $\beta_n \geq 1$, $\lim_{n \to \infty} \beta_n^{1 / n} = 1$, and, 
either $\beta_n /n^c$ eventually decreases for some $c > 0$, or either $\beta_n / n^4$ eventually increases and is log-concave. 
However, the converse does not hold in general, as the following result shows. 

\begin{theorem} \label{theo not algebra}
There exists a sequence $\beta$ such that $\beta_{n + 1} / \beta_n \converge_{n \to \infty} 1$, and for which:
\smallskip

$1)$ $\sum_{n = 0}^\infty \frac{1}{\beta_n} < + \infty$; so $H^2 (\beta) \subseteq H^\infty$; 
\smallskip

$2)$ $H^2 (\beta)$ is not an algebra. 
\end{theorem}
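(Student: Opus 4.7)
My plan is to produce an explicit sequence $\beta$ with a ``spiked'' profile that satisfies (1) but violates the submultiplicativity inequality $\beta_{m+n}\le C\beta_m\beta_n$. Since the earlier Proposition shows that submultiplicativity (up to a constant) is a necessary condition for $H^2(\beta)$ to be an algebra, this automatically gives (2). The baseline $\beta_n\approx n^2$ already forces $\sum 1/\beta_n<\infty$, so the whole difficulty is to insert tall but wide enough spikes that ruin submultiplicativity without destroying $\beta_{n+1}/\beta_n\to 1$.

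Concretely, I would fix a very sparse sequence, say $n_k=2^{2^k}$, and write $\beta_n=n^2\gamma_n$, where $\gamma_n\ge 1$ equals $1$ outside disjoint bump intervals $I_k=[n_k-r_k,n_k+r_k]$ and, on $I_k$, $\log\gamma_n$ is the triangular (piecewise linear) function rising from $0$ at the endpoints to $c_k$ at $n_k$. I would choose the height $c_k:=3\log n_k$ and the half-width $r_k:=\lfloor(\log n_k)^2\rfloor$; the intervals $I_k$ are pairwise disjoint for $k$ large since $n_k$ grows doubly exponentially while $r_k$ grows only polylogarithmically in $n_k$.

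Then I would verify the three required properties separately. For $\beta_{n+1}/\beta_n\to 1$: outside the bumps the ratio is $((n+1)/n)^2\to 1$, while inside the linear parts of a bump the extra factor is $\exp(\pm c_k/r_k)=\exp(\pm 3/\log n_k)$, which tends to $1$ as $n\to\infty$ (this is precisely the point of taking $r_k$ much larger than $c_k$). For $\sum 1/\beta_n<\infty$: since $\gamma_n\ge 1$, we have $1/\beta_n\le 1/n^2$ and the conclusion is immediate; this gives in particular $H^2(\beta)\subseteq H^\infty$ by Proposition~\ref{theo contained in H-infty}. For the failure of submultiplicativity: take $m=n=n_k/2$ (which is an integer since $n_k$ is a power of $2$); for $k$ large enough, $n_k/2<n_k-r_k$, so $n_k/2\notin I_k$ and $\beta_{n_k/2}=n_k^2/4$, while $\beta_{n_k}=n_k^2\e^{c_k}=n_k^5$. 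Consequently
\[
\frac{\beta_{m+n}}{\beta_m\beta_n}=\frac{n_k^{\,5}}{(n_k^{\,2}/4)^{2}}=16\,n_k\longrightarrow+\infty,
\]
so no constant $C$ satisfies \eqref{beta submultiplicative}, and hence $H^2(\beta)$ is not an algebra.

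The delicate step is the simultaneous calibration in (a) and (c): the spikes must be tall enough that $\e^{c_k}/n_k^{2}\to\infty$ (to force non-submultiplicativity at the midpoint pair $(n_k/2,n_k/2)$), but also wide enough that $c_k/r_k\to 0$ (to keep consecutive ratios close to $1$). The choice $c_k=3\log n_k$, $r_k=(\log n_k)^{2}$ gives $c_k/r_k=3/\log n_k\to 0$ and simultaneously $\e^{c_k}/n_k^{2}=n_k\to\infty$, and the double-exponential growth of $n_k$ makes the spike intervals automatically disjoint. With this calibration in hand, all three conclusions of the theorem follow from the verifications above.
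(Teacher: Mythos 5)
Your construction is correct and follows essentially the same route as the paper: a weight comparable to $n^2$ off a lacunary set (giving $\sum 1/\beta_n<+\infty$), with slowly varying multiplicative spikes that make $\beta_{m+n}/(\beta_m\beta_n)$ unbounded along pairs of monomials, contradicting the boundedness of multiplication — the paper does exactly this by checking $\|e_{m_k}^2\|/\|e_{m_k}\|^2=\beta_{2m_k}^{1/2}/\beta_{m_k}\to\infty$ for its geometrically interpolated weight, which is the same submultiplicativity failure you invoke via \eqref{beta submultiplicative}. The only detail you leave implicit — that $n_k/2$ lies outside every bump $I_j$, not merely $I_k$ — is immediate from the doubly exponential spacing, so the argument is complete.
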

\begin{proof}  
Take $m_k = 3^k$ and 
\begin{displaymath}
\left\{ 
\begin{array} {lcl}
\beta_{m_k} & = & m_k^2 \smallskip \\
\beta_{2 m_k} & = & m_k^5 \, , 
\end{array}
\right.
\end{displaymath}
and
\begin{displaymath}
\beta_{n + 1} = \left\{
\begin{array} {ll} 
m_k^{3 / m_k} \beta_n  & \text{for } m_k \leq n \leq 2 m_k - 1 \, ; \smallskip \\ 
9^{1 / m_k} m_k^{- 3 / m_k} \beta_n & \text{for } 2 m_k \leq n \leq m_{k + 1} - 1 \, . 
\end{array}
\right.
\end{displaymath}

Since $\beta_n \gtrsim n^2$, we have $\sum_{n = 0}^\infty \frac{1}{\beta_n} < + \infty$. Since $m_k^{1 / m_k} \converge_{k \to \infty} 1$, we have 
$\beta_{n + 1}/ \beta_n \converge_{n \to \infty} 1$. Moreover, setting $e_n (z) = z^n$, we have 
\begin{displaymath}
\frac{\| e_{m_k}^{\, 2} \|^{\ }}{\| e_{m_k} \|^2} = \frac{ \| e_{2 m_k} \|^{\ }}{\| e_{m_k} \|^2} = \frac{\beta_{2 m_k}^{1 / 2}}{\beta_{m_k}} 
= m_k^{1 / 2} = 3^{k / 2} \converge_{k \to \infty} + \infty \, ;
\end{displaymath}
hence $H^2 (\beta)$ is not an algebra, since otherwise, we would have a positive constant $C$ for which $\| f g \| \leq C \| f \| \, \| g \|$ for all 
$f, g \in H^2 (\beta)$. 
\end{proof}

Theorem~\ref{theo not algebra} allows to answer negatively Zorboska's question 2). 

\begin{theorem}\label{cortheo not algebra}
There exist a sequence $\beta$ such that $\sum_{n = 0}^\infty \frac{1}{\beta_n} < + \infty$ and $\beta_{n + 1} / \beta_n \converge_{n \to \infty} 1$, 
and a symbol $\phi$ such that $\phi \in H^2 (\beta)$ and $\| \phi \|_\infty < 1$, but $C_\phi$ is not bounded on $H^2 (\beta)$.  
\end{theorem}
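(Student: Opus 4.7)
The plan is to reuse the weight $\beta$ constructed in the proof of Theorem~\ref{theo not algebra}, which already satisfies $\sum 1/\beta_n < \infty$ and $\beta_{n+1}/\beta_n \to 1$, and to exhibit a single lacunary symbol $\phi$ with $\|\phi\|_\infty < 1$ and $\phi \in H^2(\beta)$ but $\phi^2 \notin H^2(\beta)$. Since $\phi^2 = C_\phi(e_2)$ and $e_2 \in H^2(\beta)$, this immediately forces $C_\phi$ to be unbounded and proves the theorem.

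Concretely, with $m_k = 3^k$ as in Theorem~\ref{theo not algebra}, I would set
\begin{displaymath}
\phi(z) = \lambda \sum_{k \geq 1} m_k^{-\alpha}\, z^{m_k}
\end{displaymath}
for a fixed exponent $\alpha \in (1, 5/4)$ (say $\alpha = 9/8$) and $\lambda > 0$ small enough. The bound $\|\phi\|_\infty \leq \lambda \sum_k m_k^{-\alpha}$ is a convergent geometric sum, so after shrinking $\lambda$ it is $< 1$, in particular $\phi$ sends $\D$ into $\D$; and $\phi \in H^2(\beta)$ reduces to $\lambda^2 \sum m_k^{-2\alpha}\beta_{m_k} = \lambda^2 \sum m_k^{2 - 2\alpha} < \infty$, which holds since $\alpha > 1$.

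The decisive step is to compute the Taylor coefficients of $\phi^2$ on the bad indices $2 m_k$. The arithmetic input is that $3^i + 3^j = 2 \cdot 3^k$ with $i \leq j$ forces $i = j = k$: factor the left side as $3^i(1 + 3^{j - i})$; when $j > i$ the second factor is coprime to $3$ while the right side has $3$-adic valuation $k$, so necessarily $j = i$, whence $i = k$. Consequently $\widehat{\phi^2}(2 m_k) = \lambda^2 m_k^{-2\alpha}$ with no contribution from cross terms, and
\begin{displaymath}
\|\phi^2\|_{H^2(\beta)}^2 \geq \sum_{k \geq 1} \lambda^4 m_k^{-4\alpha}\, \beta_{2 m_k} = \lambda^4 \sum_{k \geq 1} m_k^{5 - 4\alpha} = +\infty \, ,
\end{displaymath}
since $\alpha < 5/4$ (recall $\beta_{2 m_k} = m_k^{5}$). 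Hence $\phi^2 \notin H^2(\beta)$ and the conclusion follows.

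The principal obstacle is the arithmetic uniqueness of the representation $2 m_k = m_i + m_j$: without it, cross terms could in principle cancel the diagonal contribution and rescue the norm. The choice $m_k = 3^k$ (already fixed by Theorem~\ref{theo not algebra}) is precisely what makes the uniqueness transparent, and everything else is geometric-series bookkeeping together with the selection of $\alpha$ in the open interval $(1, 5/4)$.
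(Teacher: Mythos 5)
Your proof is correct, and its skeleton is the same as the paper's: you keep the weight $\beta$ of Theorem~\ref{theo not algebra} and use the same mechanism, namely $e_2\in H^2(\beta)$ and $C_\phi(e_2)=\phi^2\notin H^2(\beta)$. The difference is in how the bad symbol is produced. The paper argues softly: since $H^2(\beta)$ is not an algebra, there is some $\varphi\in H^2(\beta)\subset H^\infty$ with $\varphi^2\notin H^2(\beta)$, which one rescales to get $\|\varphi\|_\infty<1$; this leaves implicit the polarization step showing that failure of the algebra property already yields a function whose \emph{square} escapes. You instead construct $\phi$ explicitly as a lacunary series on the frequencies $m_k=3^k$, verify $\|\phi\|_\infty<1$ and $\phi\in H^2(\beta)$ by geometric-series estimates (using $\beta_{m_k}=m_k^2$ and $\alpha>1$), and get $\phi^2\notin H^2(\beta)$ from the coefficient at $2m_k$ together with $\beta_{2m_k}=m_k^5$ and $\alpha<5/4$. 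Your $3$-adic uniqueness claim for $3^i+3^j=2\cdot 3^k$ is right (slightly compressed: if $j>i$ the valuation forces $i=k$ and then $1+3^{j-i}=2$ is impossible, so $j=i$ and hence $i=k$), but it is actually dispensable: all Taylor coefficients of $\phi$ are nonnegative, so $\widehat{\phi^2}(2m_k)\geq \lambda^2 m_k^{-2\alpha}$ with no possible cancellation, which is all the lower bound requires. What your route buys is a self-contained, fully explicit example with quantitative blow-up along the indices $2m_k$; what the paper's route buys is brevity, by recycling Theorem~\ref{theo not algebra} wholesale.
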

\begin{proof} 
We use the sequence obtained in Theorem~\ref{theo not algebra}. There exists $\varphi\in  H^2 (\beta)\subset H^\infty$ such that  
$\varphi^2 \notin H^2 (\beta)$; we can assume that $\|\varphi\|_\infty<1$.
Clearly $C_\varphi$ is not bounded on $H^2 (\beta)$ since $z^2\in H^2 (\beta)$ and $C_\phi (z^2) = \phi^2$.
\end{proof}
%

\subsection{Slowly oscillating sequences} \label{SO}

In Corollary~\ref{coro CN algebra}, we saw that $\sum_{n = 0}^\infty \frac{1}{\beta_n} < \infty$ is necessary for $H^2 (\beta)$ to be an algebra, and 
in \cite[Theorem~3.2.7]{ENZ}, it is shown that the converse holds when $\beta$ satisfies some conditions. Here, we see that the converse holds 
with another regularity condition on $\beta$, namely when $\beta$ is slowly oscillating (recall that we saw in Theorem~\ref{theo CN SO}, 
assuming that  $\lim_{n \to \infty} \beta_n^{1 / n} = 1$, that $\beta$ slowly oscillating is necessary in order that every symbol 
$\phi \in H^2 (\beta)$ induces a bounded composition operator on $H^2 (\beta)$). 

\begin{theorem} \label{theo converse with SO}
If $\sum_{n = 0}^\infty \frac{1}{\beta_n} < + \infty$ and $\beta$ is slowly oscillating, then $H^2 (\beta)$ is an algebra. 
\end{theorem}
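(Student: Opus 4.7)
The plan is to show directly that multiplication is bounded on $H^2(\beta)$ via a Schur-type estimate on the convolution of Taylor coefficients. Writing $f(z) = \sum a_n z^n$ and $g(z) = \sum b_n z^n$, the product $fg$ has coefficients $c_n = \sum_{k+l=n} a_k b_l$, and I want $\|fg\|^2 = \sum_n \beta_n |c_n|^2 \le K \|f\|^2 \|g\|^2$ for some constant $K$ depending only on $\beta$.

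The key step is to apply the Cauchy--Schwarz inequality on the inner sum by factoring the kernel $\sqrt{\beta_n/(\beta_k\beta_l)}$, writing
\[
|c_n|^2 \le \Bigl(\sum_{k+l=n} \frac{1}{\beta_k\beta_l}\Bigr) \Bigl(\sum_{k+l=n} \beta_k\beta_l\,|a_k|^2|b_l|^2\Bigr).
\]
Multiplying by $\beta_n$ and summing over $n$ then gives $\|fg\|^2 \le M\cdot \|f\|^2\|g\|^2$, where
\[
M := \sup_{n \ge 0}\ \beta_n \sum_{k=0}^n \frac{1}{\beta_k\,\beta_{n-k}},
\]
since the second factor, summed over $n$, telescopes to $\sum_{k,l}\beta_k\beta_l|a_k|^2|b_l|^2=\|f\|^2\|g\|^2$. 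The whole problem reduces to showing $M<+\infty$.

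To bound $M$, I would split the inner sum at $k=n/2$. When $0\le k\le n/2$, one has $n/2 \le n-k \le n \le 2n$, so slow oscillation gives $\beta_{n-k}\ge c\,\beta_n$, whence $\beta_n/\beta_{n-k}\le 1/c$. Therefore
\[
\sum_{0\le k\le n/2}\frac{\beta_n}{\beta_k\beta_{n-k}} \le \frac{1}{c}\sum_{k=0}^\infty \frac{1}{\beta_k},
\]
which is finite by hypothesis. The symmetric estimate (with $\beta_k\ge c\beta_n$ when $k>n/2$) handles the other half, yielding $M \le (2/c)\sum_{k\ge 0} 1/\beta_k < +\infty$. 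This produces the desired multiplicative inequality $\|fg\| \le \sqrt{M}\,\|f\|\,\|g\|$.

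I do not expect a real obstacle: each hypothesis enters in exactly one place (slow oscillation trades $\beta_n$ for $\beta_{n-k}$ on the half where $n-k$ is comparable to $n$, and summability of $1/\beta_n$ then finishes the estimate), and the rest is the standard Cauchy--Schwarz manipulation for convolution in a weighted $\ell^2$ space.
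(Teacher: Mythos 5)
Your proposal is correct and follows essentially the same route as the paper: the paper first shows $\beta_n B_n = O(1)$ (with $B_n = \sum_{k=0}^n 1/(\beta_k\beta_{n-k})$) by splitting the sum at $k = n/2$ and using slow oscillation plus $\sum 1/\beta_n < \infty$, and then applies exactly your Cauchy--Schwarz factorization of the convolution kernel. The only cosmetic difference is that the paper works with $f^2$ and phrases the two steps as separate lemmas, while you prove the bilinear bound $\|fg\| \le \sqrt{M}\,\|f\|\,\|g\|$ directly.
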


Using Theorem~\ref{partial answer}, we get the following corollary.

\begin{theorem}
If $\sum_{n = 0}^\infty \frac{1}{\beta_n} < + \infty$ and $\beta$ is slowly oscillating, then all symbols $\phi \in H^2 (\beta)$ induce bounded 
composition operators on $H^2 (\beta)$. 
\end{theorem}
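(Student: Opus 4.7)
The plan is essentially to combine the two immediately preceding results, since the hypotheses of this theorem are tailor-made for that concatenation. Specifically, I would argue as follows.

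First, observe that the hypotheses $\sum_{n=0}^\infty 1/\beta_n < +\infty$ and $\beta$ slowly oscillating are exactly the hypotheses of Theorem~\ref{theo converse with SO}. Applying that theorem, I obtain that $H^2(\beta)$ is an algebra. This is the nontrivial input; once I have it, the present theorem becomes purely a corollary.

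Next, I want to invoke Theorem~\ref{partial answer}, part~$3)$, which asserts that if $H^2(\beta)$ is an algebra and $\beta$ is slowly oscillating, then every symbol $\phi \in H^2(\beta)$ induces a bounded composition operator on $H^2(\beta)$. The first condition has just been verified, and the second is part of the hypothesis. So the proof reduces to a two-line citation.

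In terms of obstacles, there is essentially nothing new to overcome at this stage: the real work has been done earlier, first in proving that slow oscillation together with summability of $1/\beta_n$ forces the algebra property (Theorem~\ref{theo converse with SO}), and second in showing that symbols in an algebra $H^2(\beta)$ with $\beta$ slowly oscillating yield bounded composition operators, using the boundedness of $C_{T_a}$ via \cite[Theorem~4.6]{LLQR-weighted} and the factorization $\phi = T_{-a} \circ \psi$ with $\psi(0)=0$. The present theorem simply packages these two results together, so the proof can be written in a few lines with no additional computation required.
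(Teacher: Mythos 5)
Your proposal is correct and is exactly the argument the paper intends: the theorem is stated as a corollary obtained by combining Theorem~\ref{theo converse with SO} (the hypotheses give that $H^2(\beta)$ is an algebra) with part~$3)$ of Theorem~\ref{partial answer} (algebra plus slow oscillation gives boundedness of $C_\phi$ for every symbol $\phi \in H^2(\beta)$). No gap, and no difference from the paper's route.
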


Theorem~\ref{theo converse with SO} can be deduced from the condition (3.6.1) of \cite[Lemma~3.6.3]{ENZ}, but we give a direct proof for 
sake of completeness. We introduce the following notation:

\begin{equation} \label{BN} 
B_n = \sum_{k = 0}^n \frac{1}{\beta_k \beta_{n - k}} \, \cdot
\end{equation}

Note that $B_n \geq \frac{1}{\beta_0 \beta_n}$ so $\inf_{n \geq 0} B_n \beta_n > 0$. 
\medskip

The proof will follow from the next two lemmas. 

\begin{lemma}
If $\sum_{n = 0}^\infty \frac{1}{\beta_n} < + \infty$ and $\beta$ is slowly oscillating, then $\beta_n B_n = O (1)$. 
\end{lemma}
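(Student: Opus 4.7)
The estimate should follow from a one-line symmetry trick: split the convolution sum at the midpoint $k=n/2$ and, on each half, replace the index close to $n$ by $n$ itself using slow oscillation. Set $M = \sum_{k=0}^{\infty} 1/\beta_k$, which is finite by hypothesis, and let $c$ be the constant from the slow oscillation property. The only ingredient I need from slow oscillation is the one-sided statement that $\beta_m \geq c\,\beta_n$ whenever $n/2 \leq m \leq n$ (this is the lower bound in the definition, applied to $m$ in the admissible window).

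Next I would write
\begin{equation*}
B_n = \sum_{0 \leq k \leq n/2} \frac{1}{\beta_k \beta_{n-k}} + \sum_{n/2 < k \leq n} \frac{1}{\beta_k \beta_{n-k}} .
\end{equation*}
In the first sum, $n-k \geq n/2$, so $\beta_{n-k} \geq c\,\beta_n$ and therefore
\begin{equation*}
\sum_{0 \leq k \leq n/2} \frac{1}{\beta_k \beta_{n-k}} \leq \frac{1}{c\,\beta_n} \sum_{k=0}^{\infty} \frac{1}{\beta_k} = \frac{M}{c\,\beta_n}.
\end{equation*}
The substitution $k \mapsto n-k$ turns the second sum into the same kind of expression, yielding the same bound $M/(c\,\beta_n)$. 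Adding the two halves gives $B_n \leq 2M/(c\,\beta_n)$, equivalently $\beta_n B_n \leq 2M/c$, which is the desired $O(1)$ estimate.

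I do not expect any real obstacle: the summability hypothesis $\sum 1/\beta_k < \infty$ is used exactly once (to bound the tail $\sum 1/\beta_k$ by $M$), and slow oscillation is used exactly once (to pull $\beta_n$ out of the sum on the half where $n-k$ is comparable to $n$). The only mild care is to handle the midpoint index without double counting, which is why I used the strict inequality $n/2 < k$ in the second sum.
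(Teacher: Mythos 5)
Your proof is correct and follows essentially the same route as the paper: split the sum $B_n$ at $k = n/2$, use the symmetry $k \mapsto n-k$, apply the slow oscillation bound $\beta_{n-k} \gtrsim \beta_n$ on the half where $n/2 \leq n-k \leq n$, and bound the remaining factor by $M = \sum_k 1/\beta_k$. The only cosmetic difference is that the paper writes the symmetry as $B_n \leq 2\sum_{0 \leq k \leq n/2}$ up front, while you treat the two halves separately; the estimates are identical.
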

\begin{proof}
Set $M = \sum_{n = 0}^\infty \frac{1}{\beta_n} \,$. We have 
\begin{displaymath}
B_n \leq 2 \sum_{0 \leq k \leq n / 2} \frac{1}{\beta_k \beta_{n - k}} \, \cdot
\end{displaymath}
But $\frac{1}{\beta_{n - k}} \leq C \, \frac{1}{\beta_n} \,$, since $n/ 2 \leq n - k \leq n$ for $0 \leq k \leq n / 2$, and $\beta$ is  slowly oscillating. Hence 
\begin{displaymath}
B_n \leq 2 \, C \, \frac{1}{\beta_n} \sum_{0 \leq k \leq n / 2} \frac{1}{\beta_k} \leq 2 \, C M \, \frac{1}{\beta_n} \, \cdot \qedhere
\end{displaymath}
\end{proof}

The second lemma is nothing else than the condition (3.2.2) of \cite{ENZ} (see also \cite[Lemma, page~207]{N1}). 

\begin{lemma} \label{Pascal criterion} 
If $\beta_n B_n = O (1)$, then $H^2 (\beta)$ is an algebra. 
\end{lemma}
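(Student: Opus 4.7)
The plan is a direct Cauchy--Schwarz computation on the convolution that defines the product. Given $f(z) = \sum a_n z^n$ and $g(z) = \sum b_n z^n$ in $H^2(\beta)$, I would start by writing $(fg)(z) = \sum c_n z^n$ with $c_n = \sum_{k=0}^n a_k b_{n-k}$, so that $\|fg\|^2 = \sum_n |c_n|^2 \beta_n$, and the whole task is to bound this in terms of $\|f\|^2 \|g\|^2 = \big(\sum_k |a_k|^2 \beta_k\big)\big(\sum_j |b_j|^2 \beta_j\big)$.

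The key step is to split each term in the convolution as
\[
a_k b_{n-k} = \bigl(a_k \sqrt{\beta_k}\bigr)\bigl(b_{n-k}\sqrt{\beta_{n-k}}\bigr) \cdot \frac{1}{\sqrt{\beta_k \beta_{n-k}}}
\]
and apply the Cauchy--Schwarz inequality to the sum $c_n = \sum_{k=0}^n a_k b_{n-k}$. This yields
\[
|c_n|^2 \leq \Bigl(\sum_{k=0}^n \frac{1}{\beta_k \beta_{n-k}}\Bigr) \sum_{k=0}^n |a_k|^2 \beta_k \, |b_{n-k}|^2 \beta_{n-k} = B_n \sum_{k=0}^n |a_k|^2 \beta_k \, |b_{n-k}|^2 \beta_{n-k}.
\]

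Multiplying by $\beta_n$, invoking the hypothesis $\beta_n B_n \leq C$ for some constant $C$, and swapping the order of summation (the summand is non-negative, so Tonelli applies), I would conclude
\[
\|fg\|^2 \leq C \sum_{n=0}^\infty \sum_{k=0}^n |a_k|^2 \beta_k \, |b_{n-k}|^2 \beta_{n-k} = C \Bigl(\sum_{k=0}^\infty |a_k|^2 \beta_k\Bigr)\Bigl(\sum_{j=0}^\infty |b_j|^2 \beta_j\Bigr) = C \, \|f\|^2 \|g\|^2.
\]
In particular $fg \in H^2(\beta)$, so $H^2(\beta)$ is an algebra.

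There is essentially no obstacle here: the only choice to make is the weighting inside Cauchy--Schwarz, and the natural splitting that reproduces $B_n$ on one side forces $|a_k|^2\beta_k |b_{n-k}|^2\beta_{n-k}$ on the other, after which a routine change of order of summation factorises everything. The assumption $\beta_n B_n = O(1)$ is used in exactly one place, immediately after Cauchy--Schwarz, to turn the unwieldy weight $\beta_n B_n$ into a harmless constant.
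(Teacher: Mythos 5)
Your proof is correct and is essentially the paper's own argument: the same Cauchy--Schwarz splitting of the convolution coefficients producing the factor $B_n$, followed by the hypothesis $\beta_n B_n = O(1)$ and an interchange of summation. The only cosmetic difference is that the paper reduces to showing $f^2 \in H^2(\beta)$ while you treat a general product $fg$ directly, which is the identical computation.
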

\begin{proof}
It suffices to show that $f^2 \in H^2 (\beta)$ for all $f \in H^2 (\beta)$. Let $f \in H^2 (\beta)$ and write $f (z) = \sum_{n = 0}^\infty a_n z^n$. We have 
\begin{displaymath}
[ f (z)]^2 = \sum_{n = 0}^\infty \bigg( \sum_{k = 0}^n a_k a_{n - k} \bigg) \, z^n \, . 
\end{displaymath}
By the Cauchy-Schwarz inequality, we have
\begin{align*}
\sum_{n = 0}^\infty \bigg|  \sum_{k = 0}^n a_k a_{n - k} \bigg|^2 \beta_n 
& \leq \sum_{n = 0}^\infty \bigg( \sum_{k = 0}^n | a_k a_{n - k} |^2 \beta_k \beta_{n - k} \bigg) \, 
\bigg( \sum_{k = 0}^n \frac{1}{\beta_k \beta_{n - k}} \bigg) \, \beta_n \\ 
& \lesssim \sum_{n = 0}^\infty \bigg( \sum_{k = 0}^n | a_k a_{n - k} |^2 \beta_k \beta_{n - k} \bigg) \\ 
& = \bigg( \sum_{j = 0}^\infty | a_j |^2 \beta_j \bigg)^2 = \| f \|^4 < + \infty \, , 
\end{align*}
which says that $f^2 \in H^2 (\beta)$. 
\end{proof}

As said, when $\beta$ is ``regular'', $\sum_{n = 0}^\infty \frac{1}{\beta_n} < + \infty$ is also necessary in order that $H^2 (\beta)$ be an algebra 
(\cite[Theorem~3.2.7]{ENZ}).  
In general, we have no necessary and sufficient condition. However, we can state the following necessary conditions. 

\begin{proposition} \label{prop CN algebra} 
If $H^2 (\beta)$ is an algebra, then 
\smallskip

$1)$ $\beta_n B_n = O (n)$; 
\smallskip

$2)$ $\sum_{n = 0}^\infty \beta_n B_n^2 < + \infty$. 
\end{proposition}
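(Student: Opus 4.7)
The plan is to exploit the bilinear estimate $\|fg\| \le C \|f\|\|g\|$ that holds (for some $C > 0$) because $H^2(\beta)$ is an algebra. The key is to pick well-chosen test functions for each part, and in both cases, the identity $\|e_m\| = \beta_m^{1/2}$ is what translates the algebra inequality into a pointwise estimate on $\beta$.

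For part $1)$, the test functions are simply the monomials $e_k$ and $e_{n-k}$. Since $e_k \, e_{n-k} = e_n$, the algebra property gives
\[
\beta_n^{1/2} = \| e_n \| \le C \, \| e_k \| \, \| e_{n-k} \| = C \, \beta_k^{1/2} \, \beta_{n-k}^{1/2},
\]
so $\frac{1}{\beta_k \beta_{n-k}} \le \frac{C^2}{\beta_n}$ for every $0 \le k \le n$. Summing over $k$ yields $B_n \le \frac{C^2 (n+1)}{\beta_n}$, i.e., $\beta_n B_n = O(n)$.

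For part $2)$, I would use a single global test function that ``sees'' $B_n$ as a Fourier coefficient. Namely, set
\[
h(z) = \sum_{n = 0}^\infty \frac{z^n}{\beta_n},
\]
so that $\widehat{h^{\,2}}(n) = B_n$ by the convolution formula for the Taylor coefficients of a product. Here I use Corollary~\ref{coro CN algebra}: since $H^2 (\beta)$ is an algebra, $\sum_n 1/\beta_n < +\infty$, and hence $h \in H^2 (\beta)$ with $\| h \|^2 = \sum_n \beta_n / \beta_n^2 = \sum_n 1/\beta_n < \infty$. The algebra bound $\|h^2\| \le C \|h\|^2$ then reads
\[
\sum_{n = 0}^\infty \beta_n B_n^{\,2} \; = \; \| h^2 \|^2 \; \le \; C^2 \, \| h \|^4 \; = \; C^2 \bigg( \sum_{n = 0}^\infty \frac{1}{\beta_n} \bigg)^{\! 2} \; < \; + \infty \, .
\]

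Neither step is difficult once the right test functions are identified; the main conceptual point — and the only place where something could go wrong — is ensuring that $h$ really lies in $H^2(\beta)$, which is why Corollary~\ref{coro CN algebra} must be invoked before squaring. No further regularity on $\beta$ is needed.
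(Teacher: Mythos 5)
Your proposal is correct and follows essentially the same route as the paper: part $1)$ uses the algebra inequality on the monomials $e_k, e_{n-k}$ (the paper's submultiplicativity estimate \eqref{beta submultiplicative}) and sums over $k$, and part $2)$ applies the algebra property to the test function $\sum_n z^n/\beta_n$, whose membership in $H^2(\beta)$ comes from Corollary~\ref{coro CN algebra} and whose square has coefficients $B_n$. The only cosmetic difference is that you use the quantitative bound $\|h^2\|\le C\|h\|^2$ where the paper only needs $h^2\in H^2(\beta)$; both are valid.
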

\begin{proof}
1) By \eqref{beta submultiplicative}, we have $\frac{1}{\beta_k \beta_l} \leq C \, \frac{1}{\beta_{k + l}}$\,; so 
\begin{displaymath}
B_n = \sum_{k + l = n} \frac{1}{\beta_k \beta_l} \leq C n \, \frac{1}{\beta_n} \, \cdot
\end{displaymath}

2) Let $ f (z) = \sum_{n = 0}^\infty \frac{1}{\beta_n} \, z^n$; we have $\| f \|^2 = \sum_{n = 0}^\infty \frac{1}{\beta_n} < + \infty$, so 
$f \in H^2 (\beta)$. Since $H^2 (\beta)$ is an algebra, we have $f^2 \in H^2 (\beta)$. But $[ f (z)]^2 = \sum_{n = 0}^\infty B_n z^n$, so we get 
$\sum_{n = 0}^\infty B_n^2 \beta_n < + \infty$. 
\end{proof}

The necessary conditions of Corollary~\ref{coro CN algebra} and of Proposition~\ref{prop CN algebra} are not sufficient. 

\begin{proposition} \label{prop not CS for algebra}
There exists a sequence $\beta$ such that $\sum_{n = 0}^\infty \beta_n B_n^2 < + \infty$, hence  $\sum_{n = 0}^\infty \frac{1}{\beta_n} < + \infty$, 
but for which $H^2 (\beta)$ is not an algebra. 
\end{proposition}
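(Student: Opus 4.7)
The plan is to construct an explicit sequence $\beta$ by perturbing the base sequence $\beta_n = n^2$ downward at a very sparse subsequence of indices, just deep enough to destroy the submultiplicative inequality that an algebra would enforce, while keeping $B_n$ small enough that $\sum \beta_n B_n^2 < \infty$.

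Concretely, set $m_k = 3^k$ for $k \geq 1$, fix $\epsilon \in (0, 1/2)$, and define
\[
\beta_0 = 1, \quad \beta_n = n^2 \text{ for every } n \geq 1 \text{ with } n \notin \{m_k\}_{k \geq 1}, \quad \beta_{m_k} = m_k^{2 - \epsilon}.
\]
The algebra property is straightforward to refute: since $m_k + 1 = 3^k + 1$ is never a power of $3$ for $k \geq 1$, the index $m_k + 1$ is generic, so $\beta_{m_k + 1} = (m_k + 1)^2$. Therefore
\[
\frac{\beta_{m_k + 1}}{\beta_{m_k} \beta_1} = \frac{(m_k + 1)^2}{m_k^{2 - \epsilon}} \asymp m_k^\epsilon \converge_{k \to \infty} + \infty,
\]
which by \eqref{beta submultiplicative} prevents $H^2(\beta)$ from being an algebra.

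The heart of the argument is controlling $B_n$. The key observation is the uniform pointwise bound $\beta_j \geq j^{2 - \epsilon}$ valid for every $j \geq 1$ (equality at spike indices $j = m_k$, strict inequality elsewhere since $j^2 > j^{2-\epsilon}$). Together with $\beta_0 = 1$, this yields
\[
B_n \leq \frac{2}{\beta_n} + \sum_{j = 1}^{n - 1} \frac{1}{j^{2 - \epsilon} (n - j)^{2 - \epsilon}} \leq \frac{C_\epsilon}{n^{2 - \epsilon}},
\]
where the middle sum is estimated by splitting at $j = n/2$ and invoking $\sum_{j \geq 1} j^{-(2-\epsilon)} < \infty$ (since $2 - \epsilon > 1$). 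Using next that $\beta_n \leq n^2$ everywhere (including at spikes since $n^{2-\epsilon} \leq n^2$), I get
\[
\beta_n B_n^2 \leq \frac{C_\epsilon^2 \, \beta_n}{n^{4 - 2\epsilon}} \leq \frac{C_\epsilon^2}{n^{2 - 2\epsilon}},
\]
which is summable thanks to $\epsilon < 1/2$. Hence $\sum_{n \geq 0} \beta_n B_n^2 < + \infty$, and the consequence $\sum_{n \geq 0} 1/\beta_n < + \infty$ follows immediately from $\beta_n B_n^2 \geq 1/(\beta_0^2 \beta_n)$ (already noted after \eqref{BN}).

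The delicate point is calibrating the dip depth: algebra failure demands $\epsilon > 0$, while the uniform lower bound $\beta_j \geq j^{2 - \epsilon}$ is only useful for controlling $B_n$ when $2 - \epsilon > 1$, and the resulting estimate $\beta_n B_n^2 \lesssim n^{-(2 - 2\epsilon)}$ is summable only when $\epsilon < 1/2$. The interval $(0, 1/2)$ for $\epsilon$ simultaneously satisfies both constraints, and this tight compatibility window is what makes the construction work. A nice feature of this approach is that one never has to track individual spike–spike interactions in the double sum defining $B_n$; the single pointwise estimate $\beta_j \geq j^{2-\epsilon}$ absorbs them all at once.
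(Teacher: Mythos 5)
Your proof is correct, and it follows the same overall strategy as the paper --- exhibit an explicit polynomial-scale weight, refute the algebra property through the necessary condition \eqref{beta submultiplicative}, and control $B_n$ by the standard convolution estimate $\sum_{j=1}^{n-1} j^{-s}(n-j)^{-s} \lesssim n^{-s}$ for $s>1$ --- but with a different counterexample. The paper alternates between $\beta_n = n^{\gamma'}$ for $n$ odd and $\beta_n = n^{\gamma}$ for $n$ even, with $1 < \gamma' < \gamma < 2\gamma'-1$, kills the algebra property because $\beta_{n+1}/\beta_n$ is unbounded (the shift is not bounded), and then estimates $B_n$ by splitting the convolution sum according to the parity of the index. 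Your sparse-dip weight ($\beta_n = n^2$ except $\beta_{3^k} = 3^{k(2-\epsilon)}$, with $0<\epsilon<1/2$) refutes the algebra property via the failure of submultiplicativity at the pair $(m_k,1)$ --- which, since $\beta_1 = 1$, is again an unbounded-shift statement --- and your single uniform bound $\beta_j \geq j^{2-\epsilon}$ turns the estimate $B_n \lesssim n^{-(2-\epsilon)}$ into a one-line consequence of the convolution bound, with no case analysis and no need to track spike--spike interactions. Both arguments are complete; your calibration $\epsilon \in (0,1/2)$ plays exactly the role of the paper's condition $2\gamma' > \gamma+1$, making the irregularity strong enough to destroy the algebra property yet mild enough that $\sum_n \beta_n B_n^2$ still converges.
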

\begin{proof}
We take $\beta_0 = 1$, and, for $n \geq 1$, $\beta_n = n^\gamma$ for $n$ even and $\beta_n = n^{\gamma'}$ for $n$ odd, where 
$1 < \gamma ' < \gamma$ and $2 \gamma' > \gamma + 1$ (for example, $\gamma' = 3$ and $\gamma = 4$). Since $\beta_{n + 1} / \beta_n$ is 
not bounded, $H^2 (\beta)$ is not an algebra. 
It is clear that $\sum_{n = 0}^\infty \frac{1}{\beta_n} < + \infty$. 

Now
\begin{align*}
B_{2 n} 
& = \sum_{k = 0}^{2 n} \frac{1}{\beta_k \beta_{2 n - k}}  
\approx \sum_{k \text{ even}} \frac{1}{k^\gamma (2 n - k)^\gamma}  + \sum_{k \text{ odd}} \frac{1}{k^{\gamma'} (2 n - k)^{\gamma '}} \\
& \lesssim \frac{1}{n^\gamma} + \frac{1}{n^{\gamma '}} \lesssim  \frac{1}{n^{\gamma '}} \, \raise 1 pt \hbox{,} 
\end{align*}
so $(2 n)^\gamma B_{2 n}^2 \lesssim (2 n)^{\gamma - 2 \gamma '}$. 
\smallskip

Similarly $(2 n + 1)^{\gamma '} B_{2 n + 1}^2 \lesssim (2 n + 1)^{- \gamma '}$. 
\smallskip

Since $\gamma - 2 \gamma ' < - 1$ and $\gamma '  > 1$, we obtain the convergence of the series $\sum_{n \geq 0}^\infty \beta_n B_n^2$. 
\end{proof}

Nevertheless, $\beta$ slowly oscillating is not necessary for $H^2 (\beta)$ to be an algebra. 

\begin{example} \label{theo exp sqrt n}
If $\beta_n = \e^{\sqrt n}$ for all $n \geq 0$, then $H^2 (\beta)$ is an algebra, though $\beta$ is not slowly oscillating. 
\end{example}

That $\beta$ is not slowly oscillating is clear. That $H^2 (\beta)$ is an algebra, follows from \cite[Lemma~3.5.5]{ENZ}.  

As a consequence, using Theorem~\ref{theo CN SO}, we get the following result. 

\begin{theorem}
There exist a sequence $\beta$ such that $H^2 (\beta)$ is an algebra, and a symbol $\phi \in H^2 (\beta)$ for which $C_\phi$ is not bounded on 
$H^2 (\beta)$. 
\end{theorem}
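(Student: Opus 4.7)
The plan is to use the sequence $\beta_n = \e^{\sqrt n}$ provided by Example~\ref{theo exp sqrt n} and simply combine it with Theorem~\ref{theo CN SO}. We already know from that example that $H^2(\beta)$ is an algebra, so the first half of the conclusion is free.

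For the second half, I would verify that this $\beta$ satisfies the hypotheses of Theorem~\ref{theo CN SO}. The key point is that
\begin{displaymath}
\beta_n^{1/n} = \e^{1/\sqrt n} \converge_{n \to \infty} 1 \, ,
\end{displaymath}
so in particular $\lim_{n \to \infty} \beta_n^{1/n} = 1$. On the other hand, $\beta$ is not slowly oscillating (as already noted in the statement of Example~\ref{theo exp sqrt n}; indeed, for $m = 2n$, we have $\beta_{2n}/\beta_n = \e^{(\sqrt 2 - 1)\sqrt n} \to +\infty$, contradicting the required two-sided bound).

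Applying the contrapositive of Theorem~\ref{theo CN SO}: since $\lim \beta_n^{1/n} = 1$ but $\beta$ fails to be slowly oscillating, it cannot be the case that every symbol $\phi \in H^2(\beta)$ induces a bounded composition operator on $H^2(\beta)$. Hence there exists a symbol $\phi \in H^2(\beta)$ for which $C_\phi$ is not bounded on $H^2(\beta)$, which concludes the proof.

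There is no real obstacle here: the entire content of the statement is the combination of the algebra property from Example~\ref{theo exp sqrt n} with the necessary condition from Theorem~\ref{theo CN SO}, showing that the two properties (being an algebra, and having all symbols in the space induce bounded composition operators) are genuinely independent.
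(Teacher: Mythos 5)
Your proof is correct and is essentially the paper's own argument: the paper obtains this theorem precisely by combining Example~\ref{theo exp sqrt n} (with $\beta_n = \e^{\sqrt n}$, $H^2(\beta)$ an algebra but not slowly oscillating) with the contrapositive of Theorem~\ref{theo CN SO}, exactly as you do. Your explicit checks that $\lim_{n\to\infty}\beta_n^{1/n}=1$ and that $\beta_{2n}/\beta_n=\e^{(\sqrt 2 -1)\sqrt n}\to+\infty$ are the right verifications and fill in the details the paper leaves implicit.
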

%

\subsection{Hankel matrices} \label{matrices} 

Actually the problem to know whether $H^2(\beta)$ is an algebra or not can be formulated in terms of Schur multipliers acting on a family of Hankel matrices.

\begin{theorem}\label{schuralgebra}

$H^2(\beta)$ is an algebra if and only if the map
\begin{displaymath}
\begin{array}{cccl}
\Psi \colon & \ell^2 & \longrightarrow & B(\ell^2) \cr
& u & \longmapsto & \Bigg( {\displaystyle u_{k+l}\sqrt{\frac{\beta_{k+l}}{\beta_k\beta_{l}}} } \Bigg)_{k, l}
\end{array}
\end{displaymath}
is bounded.
\end{theorem}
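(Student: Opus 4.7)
The plan is to show that both conditions are equivalent, via the standard Hilbert space recipe, to the boundedness of the trilinear form associated with multiplication. First I would reparametrize everything so that the relevant sequences live in $\ell^2$ with unit-normalized spaces. Namely, if $f(z)=\sum a_k z^k$ and $g(z)=\sum b_l z^l$ are in $H^2(\beta)$, set $x_k=a_k\sqrt{\beta_k}$ and $y_l=b_l\sqrt{\beta_l}$, so that $x,y\in\ell^2$ with $\|x\|_{\ell^2}=\|f\|_{H^2(\beta)}$ and $\|y\|_{\ell^2}=\|g\|_{H^2(\beta)}$. By the definition of the norm on $H^2(\beta)$, the product $fg$ (viewed formally via Cauchy convolution) satisfies
\begin{displaymath}
\|fg\|_{H^2(\beta)}=\sup_{\|h\|_{H^2(\beta)}\leq 1}|\langle fg,h\rangle_{H^2(\beta)}|,
\end{displaymath}
and one parametrizes $h(z)=\sum h_n z^n$ similarly by $u_n=h_n\sqrt{\beta_n}$, so that $\|u\|_{\ell^2}=\|h\|_{H^2(\beta)}$.

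Next I would carry out the computation of the pairing. Expanding the product, one finds
\begin{displaymath}
\langle fg,h\rangle_{H^2(\beta)}=\sum_{n=0}^\infty \overline{h_n}\Big(\sum_{k+l=n}a_k b_l\Big)\beta_n=\sum_{k,l\geq 0}\overline{u_{k+l}}\sqrt{\frac{\beta_{k+l}}{\beta_k\beta_l}}\,x_k y_l.
\end{displaymath}
The right-hand side is exactly $\langle \Psi(\overline{u})y,\overline{x}\rangle_{\ell^2}$ (up to complex conjugations that do not affect absolute values). Thus $H^2(\beta)$ is an algebra, i.e. there exists $C>0$ such that $\|fg\|\leq C\|f\|\|g\|$ for every $f,g\in H^2(\beta)$, if and only if there exists $C>0$ such that
\begin{displaymath}
\Big|\sum_{k,l\geq 0}u_{k+l}\sqrt{\tfrac{\beta_{k+l}}{\beta_k\beta_l}}\,x_k y_l\Big|\leq C\|u\|_{\ell^2}\|x\|_{\ell^2}\|y\|_{\ell^2}
\end{displaymath}
for all $u,x,y\in\ell^2$.

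Finally, since the quantity $\sup\{|\langle \Psi(u)y,x\rangle_{\ell^2}|\tq\|x\|_{\ell^2},\|y\|_{\ell^2}\leq 1\}$ is by definition the operator norm $\|\Psi(u)\|_{B(\ell^2)}$, the previous inequality for all $x,y\in\ell^2$ is equivalent to $\|\Psi(u)\|_{B(\ell^2)}\leq C\|u\|_{\ell^2}$ for every $u\in\ell^2$, i.e. to $\Psi\colon\ell^2\to B(\ell^2)$ being bounded. This gives the equivalence in both directions simultaneously.

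There is no real obstacle here; the only point that requires some care is to check that every $f,g\in H^2(\beta)$ can be obtained from arbitrary $\ell^2$ sequences under the bijections $a_k\leftrightarrow x_k/\sqrt{\beta_k}$, so that the supremum over $(f,g,h)$ corresponds exactly to the supremum over $(x,y,u)\in\ell^2\times\ell^2\times\ell^2$, and, symmetrically, that a pointwise estimate for finitely supported $u$ extends to all of $\ell^2$ by density. Both are routine.
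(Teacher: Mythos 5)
Your argument is correct and follows essentially the same route as the paper: both reduce the algebra property, via the substitution $x_k=a_k\sqrt{\beta_k}$ and duality in $\ell^2$, to the uniform boundedness of the trilinear form $\sum_{k,l}u_{k+l}\sqrt{\beta_{k+l}/(\beta_k\beta_l)}\,x_k y_l$, and then identify this with $\langle \Psi(u)y,x\rangle$ so that the sup over unit $x,y$ gives $\|\Psi(u)\|_{B(\ell^2)}$. The only implicit step in both your proof and the paper's is the passage from ``$fg\in H^2(\beta)$ for all $f,g$'' to the existence of a uniform constant $C$, which follows from the closed graph theorem for the bilinear multiplication map.
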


Let us point out that Lemma~\ref{Pascal criterion} also follows from this theorem. Actually, the condition $\beta_n B_n = O (1)$ is equivalent to the 
fact that $\Psi$ is bounded from $\ell^2$ to $HS (\ell^2) \subseteq B (\ell^2)$, where $HS (\ell^2)$ stands for the space of Hilbert-Schmidt operators on 
$\ell^2$.

Indeed, for every $u\in\ell^2$, we have $\dis\|\Psi(u)\|_{HS}^2=\sum_n |u_n|^2 \beta_n B_n$\,. 

This remark gives a hint on the gap between the sufficient condition $\beta_n B_n = O (1)$ and a potential characterization.

\begin{proof}[Proof of Theorem \ref{schuralgebra}]

The vector space $H^2 (\beta)$ is an algebra if and only if $fg \in H^2 (\beta)$ for all $f,g \in H^2 (\beta)$. In other words,  $H^2 (\beta)$ is an algebra 
if and only if  
\begin{displaymath}
\sum_{n = 0}^\infty \bigg| \sum_{p = 0}^n a_p b_{n - p} \bigg|^2 \beta_n < + \infty 
\end{displaymath}
whenever $\sum_{n = 0}^\infty |a_n |^2 \beta_n < + \infty$ and $\sum_{n = 0}^\infty |b_n |^2 \beta_n < + \infty$. Equivalently,

\begin{displaymath}
\biindice{\sup}{a,b \in \ell^2}{\|a\|=\|b\|=1} 
\sum_{n = 0}^\infty \Bigg| \sum_{k+l=n}\frac{a_k}{\sqrt{\beta_k}}\frac{ b_l}{\sqrt{\beta_{l}}}\sqrt{\beta_{k+l}} \Bigg|^2  < + \infty 
\end{displaymath}
or, in other words,
\begin{displaymath}
\biindice{\sup}{a,b,u \in \ell^2}{\|a\|=\|b\|=\|u\|=1} 
\Bigg| \sum_{n = 0}^\infty \sum_{k+l=n} a_k b_{l} u_n \sqrt{\frac{\beta_{k+l}}{\beta_k\beta_{l}}} \Bigg|  < + \infty \,  . 
\end{displaymath}

Let us point out that 
$$\sum_{n = 0}^\infty \sum_{k+l=n} a_k b_{l}u_n\sqrt{\frac{\beta_{k+l}}{\beta_k\beta_l}} 
= \sum_{k = 0}^\infty a_k \sum_{l = 0}^\infty M_{k,l} b_{l}=\sum_{k = 0}^\infty a_k \Big(\Psi(u)(b)\Big)_k$$
where $M_{k,l}=\dis u_{k+l}\sqrt{\frac{\beta_{k+l}}{\beta_k\beta_{l}}} \, \cdot$

Therefore,  $H^2 (\beta)$ is an algebra if and only if
\begin{displaymath}
\biindice{\sup}{b,u \in \ell^2}{\|b\|=\|u\|=1} \Big\| \Psi(u) (b) \Big\| < + \infty \, . 
\end{displaymath}
and the result follows. 
\end{proof}

As an application of Theorem~\ref{schuralgebra}, we see that the equivalence between ``$H^2 (\beta)$ is an algebra'' and 
$\sum_{n = 0}^\infty \frac{1}{\beta_n} < + \infty$ does not follows from the submultiplicativity of $\beta$. 

\begin{theorem} \label{submultiplicative}
\hfill There \hfill exists \hfill a \hfill submultiplicative \hfill sequence \hfill $\beta$ \hfill such \hfill that $\lim_{n \to \infty} \beta_n^{1 / n} = 1$ and 
$\sum_{n = 0}^\infty \frac{1}{\beta_n} < + \infty$, and for which $H^2 (\beta)$ is not an algebra. 
\end{theorem}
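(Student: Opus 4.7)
The plan is to construct an explicit submultiplicative $\beta$ meeting all three regularity conditions (submultiplicativity, $\beta_n^{1/n}\to 1$, $\sum 1/\beta_n<\infty$) while spoiling the algebra property. To enforce submultiplicativity by design, I will take $\beta$ as the infimum convolution of a carefully chosen seed $\gamma$:
\[
\beta_n = \inf\Big\{\gamma_{n_1}\cdots\gamma_{n_j} : n_1+\cdots+n_j = n,\ n_i\ge 1\Big\}.
\]
This is automatically submultiplicative with $\beta_n \le \gamma_n$. The seed will combine a polynomial baseline $\gamma_n=n^\sigma$ (with $\sigma>1$, so that the corresponding $\sum 1/\beta_n$ converges) with ``reduced'' values $\gamma_{n_k}=t_k$ prescribed along a super-lacunary sequence $(n_k)$, for instance $n_k=2^{2^k}$, with $t_k=n_k^\tau$ for some $1<\tau<\sigma$. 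The $t_k$ are tuned so that the doubling decomposition $n_k+n_k$ realizes the infimum for $\beta_{2n_k}$, hence $\beta_{2n_k}=t_k^2$ and so near-equality in submultiplicativity holds at these sparse pairs.

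The verification of the regularity conditions rests on the super-lacunarity of $(n_k)$: since the seed has polynomial growth, $\gamma_n^{1/n}\to 1$ and hence $\beta_n^{1/n}\to 1$; and $\beta_n$ stays comparable to $n^\sigma$ outside the narrow ``shadows'' of the dips (the shadow of $n_k$ is the propagation $\beta_{n_k+\delta}\le t_k\gamma_\delta$ forced by submultiplicativity), the total contribution of which to $\sum_n 1/\beta_n$ is harmless thanks to the extreme spacing of the $n_k$.

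For the failure of the algebra property I appeal to Theorem~\ref{schuralgebra}, which recasts the question as boundedness of the Schur multiplier $\Psi\colon\ell^2\to B(\ell^2)$ with entries $M_{k,l}=\sqrt{\beta_{k+l}/(\beta_k\beta_l)}\le 1$. By construction $M_{n_k,n_k}=1$, and the lacunarity of $(n_k)$ decouples the ``resonance blocks'' around each $n_k$: indices $l$ such that $2n_{k'}-l$ lands near $n_k$ for $k'\ne k$ do not exist. Choosing $u\in\ell^2$ supported on $(2n_k)$ with a well-designed profile $u_{2n_k}=c_k$, and test vectors $b^{(k)}\in\ell^2$ concentrated on ``twisted anti-diagonal'' blocks around $n_k$ (exploiting the propagation pattern of the dips to keep $M_{n_k-\delta,n_k+\delta}$ sizable over a width depending on $\tau/\sigma$), one produces $\|\Psi(u)b^{(k)}\|$ with contributions that aggregate essentially in quadrature across $k$, forcing $\|\Psi(u)\|/\|u\|_{\ell^2}\to\infty$ along a suitable sequence of symbols. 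This contradicts boundedness of $\Psi$, and hence $H^2(\beta)$ is not an algebra.

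The main obstacle is the delicate calibration of $(t_k)$ against $(n_k)$: the $t_k$ must be small enough that the doubling decomposition is the dominant one for $\beta_{2n_k}$ (so that $M_{n_k,n_k}=1$), yet large enough—and the $n_k$ spaced so widely—that the shadows of the dips do not destroy $\sum 1/\beta_n<+\infty$, nor smear out the near-additivity of $\log\beta$ on the anti-diagonal blocks. The hardest step is confirming that the aggregate of the block contributions on the $\Psi(u)b$ side really grows faster than $\|u\|_{\ell^2}$, rather than merely matching it; this is where one uses both the orthogonality of the supports (the lacunarity of $2n_k$) and the width of the resonance window around each $n_k$, which is governed by the exponent $\sigma-\tau$.
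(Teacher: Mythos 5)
The regularity half of your construction is fine: the inf-convolution is genuinely submultiplicative, the polynomial seed gives $\beta_n^{1/n}\to 1$, and with your calibration $\sum 1/\beta_n<+\infty$ survives the shadows. The non-algebra half, however, fails, and for a structural reason: with a polynomial baseline and isolated dips, near-multiplicativity $\beta_{i+j}\approx\beta_i\beta_j$ holds only at isolated points, never along a long anti-diagonal. Indeed $\beta_{n_k-\delta}\approx (n_k-\delta)^\sigma$ (a dip at $n_k$ cannot be used for indices below $n_k$), while $\beta_{n_k+\delta}\lesssim n_k^\tau\delta^\sigma$, so for every $\delta\ge 1$ the entry satisfies $M_{n_k-\delta,\,n_k+\delta}\lesssim n_k^{(\tau-\sigma)/2}\delta^{-\sigma/2}\to 0$: your ``resonance window'' has width essentially one. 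Hence each block of $\Psi(u)$ consists of the single entry $u_{2n_k}$ plus an error of summable square, so its norm is $\lesssim |u_{2n_k}|$; and operator norms of disjoint blocks do \emph{not} aggregate in quadrature: $\|\Psi(u)\|=\sup_k\|B_k\|$, not $\big(\sum_k\|B_k\|^2\big)^{1/2}$. You therefore only get $\|\Psi(u)\|\lesssim\|u\|_{\ell^2}$, and no contradiction with Theorem~\ref{schuralgebra}. To violate that criterion you need on a \emph{single} anti-diagonal $i+j=N$ about $N$ entries of size $\approx 1$ against an $\ell^2$-mass of order $\sqrt N$, i.e.\ $\log\beta$ must be (almost exactly) additive on a long interval; isolated dips cannot supply this.

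Worse, your weight is in fact an algebra weight, so no choice of $u$ and $b$ can succeed. With the calibration $2\tau\le\sigma$ which you need anyway for $\beta_{2n_k}=t_k^2$ (otherwise even $M_{n_k,n_k}\to 0$), one checks $\beta_nB_n=O(1)$: the baseline contributes $n^\sigma\sum_j j^{-\sigma}(n-j)^{-\sigma}=O(1)$ since $\sigma>1$; the dip terms contribute $n_k^{2\tau}\big(n_k^{-2\tau}+O(n_k^{-\sigma})\big)=O(1)$; and the shadows are harmless because they extend only to $\delta\lesssim n_k^{1-\tau/\sigma}$, where $n_k^{\tau-\sigma}\delta^{\sigma}\lesssim 1$. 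By Lemma~\ref{Pascal criterion}, $H^2(\beta)$ is then an algebra. The paper avoids this trap by making $\log\beta$ \emph{exactly} linear on long blocks: $\beta_n=\exp\big(n/\sigma(n)\big)$ with $\sigma(n)=k$ on $[3^{k-1},3^k)$, so that $\beta_{i+j}=\beta_i\beta_j$ for all $i,j\in[3^k/3,\,3^k/2)$; the resulting all-ones block of size $\approx 3^k$ has operator norm $\approx 3^k$ against $\|u\|_{\ell^2}\approx 3^{k/2}$ (equivalently, $\|f^2\|/\|f\|^2\to\infty$ for $f=\sum_{n\in I_k}z^n$), while the irregular jumps of the slope $1/k$ across blocks keep $\beta_nB_n$ from being $O(1)$. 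That exact additivity of $\log\beta$ on intervals of length proportional to their position is the indispensable ingredient your construction lacks.
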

\begin{proof} 
Let $m_k = 3^k$ and $\sigma (n)$ defined by $\sigma (0) = 1$ and 
\begin{displaymath}
\sigma (n) = k \quad \text{for } m_{k - 1} \leq n < m_k \text{ and } k  \geq 1 \, .
 \end{displaymath}
We define 
\begin{displaymath}
\beta_n = \exp (n / \sigma (n)) \, . 
\end{displaymath}

$\bullet$ The submultiplicativity of $\beta$ is obvious since $\sigma$ is nondecreasing:
\begin{displaymath}
\frac{m + n}{\sigma (m + n)} = \frac{m}{\sigma (m + n)} + \frac{n}{\sigma (m + n)} 
\leq \frac{m}{\sigma (m)}+ \frac{n}{\sigma (n)} \, \cdot
\end{displaymath}

$\bullet$ That $\lim_{n \to \infty} \beta_n^{1 / n} = 1$ is clear. 
\smallskip

$\bullet$ That $\sum_{n = 0}^\infty \frac{1}{\beta_n} < + \infty$ is easy: since $m_k-m_{k-1}=2m_k/3$, we get using geometric series:
\begin{align*}
\sum_{n = 0}^\infty \exp [- n / \sigma (n)] 
& = \sum_{k = 1}^\infty \bigg( \sum_{m_{k - 1} \leq n < m_k}  \exp (- n / k) \bigg) 
\lesssim \sum_{k = 1}^\infty \frac{\exp (- 2 m_k / 3)}{1  - \e^{- 1 / k}} \\ 
& \lesssim \sum_{k = 1}^\infty k \exp (- 2 m_k / 3) < + \infty \, .
\end{align*}

$\bullet$ We give two proofs that $H^2 (\beta)$ is not an algebra. 
\smallskip

-- \emph{First proof}. Let $A_n$ be the $(n \times n)$ self-adjoint matrix with all entries equal to $1$, which satisfies  $A_{n}^2 = n A_n$. Hence 
$\Vert A_n \Vert^2 = \Vert A_{n}^2 \Vert = n \Vert A_n\Vert$ and so $\Vert A_n \Vert = n$. 

Let us be more specific. Consider the interval of integers $I_k = [m_k / 3, \, m_k / 2)$ and let $u = \ind_{2 I_k} =\ind_{[2 m_k / 3, \, m_k)} \in \ell^2$. 
We have $\Vert u \Vert_{\ell^2} \approx \sqrt{m_k}$. 

Now, let $\Psi (u)$ be the matrix $\Big(u_{i + j} \sqrt{\frac{\beta_{i + j}}{\beta_i \beta_j}}\Big)_{i, j}$. We recall that, with obvious notations: 
\begin{displaymath}
| a_{i, j} | \leq b_{i, j} \quad \Longrightarrow \quad \Vert A \Vert \leq \Vert B \Vert \, .
\end{displaymath}
Therefore, since $i, j \in I_k \Rightarrow i + j \in 2I_k$:
\begin{align*}
\Vert \Psi (u) \Vert 
& \geq \bigg\| \bigg( u_{i + j} \sqrt{\frac{\beta_{i + j}}{\beta_i \beta_j}} \bigg)_{i, j \in I_k} \bigg\|
= \bigg\| \bigg( \sqrt{\frac{\beta_{i + j}}{\beta_i \beta_j}} \bigg)_{i, j \in I_k} \bigg\|\\ 
\smallskip
&  = \| (1 )_{i, j \in I_k} \| = |I_k| \approx m_k \, .
\end{align*} 
Indeed, for $i, j \in I_k$, we have $i, j, i + j \in [m_k /3, m_k) = [m_{k - 1}, m_k)$, so that
\begin{displaymath}
\beta_{i + j} = \exp [(i + j) / k] = \exp(i / k) \exp (j / k) = \beta_i \beta_j \, ,
\end{displaymath}
and we use the norm of the matrix $A_{|I_k|}$.

Hence the quotient $\frac{\Vert \Psi (u) \Vert_{B (\ell^2)}}{\Vert u \Vert_{\ell^2}}$ is unbounded as $k \to \infty$.
\smallskip

-- \emph{Second proof}. Actually, this proof was obtained thanks to the first one. Let $f = f_k$ be the function defined by
\begin{displaymath}
f (z) = \sum_{m_k /  3 \leq n < m_k /  2} z^n = \sum_{n \in I_k} z^n \, .
\end{displaymath}
We are going to prove that 
\begin{equation}\label{ecart} 
\frac{\Vert f^2 \Vert \ }{\Vert f \Vert^{2}} \converge_{k \to \infty} + \infty \, .
\end{equation}
For that, we first prove that
\begin{equation}\label{majo} 
\Vert f \Vert^2 \lesssim k \exp \Big(\frac{m_k}{2 k} \Big) \, \cdot
\end{equation}
Indeed, setting again $I_k = [m_k / 3, \, m_k / 2)$ and using $1/3 + 1/6 = 1/2$:
\begin{displaymath}
\Vert f \Vert^2 = \sum_{n \in I_k} \exp \Big( \frac{n}{k} \Big) = \e^{m_k / 3 k} \sum_{0 \leq j \leq m_k / 6} \e^{j / k} 
\lesssim \frac{\exp (\frac{m_k}{2 k})}{\exp (\frac{1}{k}) -  1} \lesssim k \exp \Big( \frac{m_k}{2 k} \Big) \, \cdot
\end{displaymath}

It remains to show that
\begin{equation}\label{mino} 
\Vert f^2 \Vert \gtrsim k^{3/2} \exp \Big(\frac{m_k}{2 k} \Big) \, \cdot
\end{equation}
For that, setting $J_k = [2 m_k / 3, m_k) = 2 I_k$, we have $f^2 = \sum_{n \in J_k} c_n z^n$, and 
\begin{displaymath}
\Vert f^2 \Vert^{2} = \sum_{n \in J_k} c_{n}^2 \exp \Big(\frac{n}{k} \Big) \, ,
\end{displaymath}
where $c_n$ is the number of indices $i \in I_k$ such that $n - i \in I_k$, that is 
\begin{displaymath}
\max (m_k / 3, \, n - m_k / 2) \leq i \leq \min (m_k / 2, \, n - m_k / 3) \, . 
\end{displaymath}
When $(5/6) m_k \leq n  < m_k$, this amounts to $n - m_k / 2 \leq i < m_k / 2$, implying $c_n  \geq m_k - n$. 

Setting $q = \e^{1/k}$ and observing that $m_{k}^{2} \, q^{- m_k / 6} \converge_{k \to \infty} 0$, we get
\begin{displaymath}
\Vert f^2 \Vert^2 \geq \sum_{5/6 m_k \leq n < m_k} (m_k  - n)^2 q^{n} = q^{m_k} \sum_{0 \leq j \leq m_k / 6} j^2 q^{- j} 
\approx q^{m_k} (1 - q^{- 1})^{- 3} \, , 
\end{displaymath}
that is $\Vert f^2 \Vert^2 \gtrsim k^3 \exp (m_k / k)$, since $(1 - q^{- 1})^{- 1} \sim k$, and that proves \eqref{mino}. 
\end{proof}

\noindent {\bf Remark.} Let us observe that, in the above proof, $\sigma (n)$ behaves, roughly speaking, as $\log n$, so that $\beta_n$ behaves as 
$\gamma_n = \exp (n / \log n)$, although $H^{2}(\gamma)$ is an algebra, as said in the next proposition. The difference between 
$\beta_n = \exp [n /\sigma (n)]$, and $\gamma_n$, can be heuristically explained as follows. Set $\tau (x) = \frac{x}{\log x}$ for $x > 1$ and 
$\tilde \tau_n = n / \sigma (n)$. 
In the discrete case, we have 
\begin{displaymath}
j, n \in [m_k / 3,\, m_k / 2] \quad \Longrightarrow \quad \tilde\tau (n + j) - \tilde\tau (n) - \tilde\tau (j) = 0 \, , 
\end{displaymath}
meaning that ${\tilde\tau}{\, ''} = 0$, while in the continuous case we have 
\begin{displaymath}
\tau'' (x) \sim \frac{- 1}{x (\log x)^2} < 0 \, .
\end{displaymath}

Proposition~\ref{prop n/ log n} follows from \cite[Theorem~3.2.7]{ENZ}, but, for sake of completeness, we prefer to give 
a short elementary proof of this particular case. 

\begin{proposition} \label{prop n/ log n}
For $\gamma (n) = \e^{n / \log n}$, $n \geq 2$ {\rm (}and $\gamma_0 = \gamma_1 = 1$, for instance{\rm )}, the space $H^2 (\gamma)$ is an algebra. 
\end{proposition}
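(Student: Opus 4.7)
The plan is to apply the Pascal criterion (Lemma \ref{Pascal criterion}): it suffices to show $\gamma_n B_n = O(1)$, where $B_n = \sum_{k=0}^n 1/(\gamma_k \gamma_{n-k})$. Setting $f(x) = x/\log x$ for $x \geq 2$ and $f(0) = f(1) = 0$, the task becomes to prove
$$\gamma_n B_n \;=\; \sum_{k=0}^n e^{-R(k,n)} \;=\; O(1), \qquad \text{where } R(k,n) := f(k) + f(n-k) - f(n).$$

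A direct computation gives $f'(x) = (\log x - 1)/(\log x)^2$ and $f''(x) = (2 - \log x)/(x(\log x)^3)$, so $f$ is concave and $f'$ is positive and decreasing on $[e^2, +\infty)$, with $f'(x) \to 0$ as $x \to \infty$. The heart of the proof is the estimate, valid for all $k$ with $e^2 \leq k \leq n/2$ (so $n - k \geq k \geq e^2$),
$$R(k, n) \;=\; f(k) - \int_{n-k}^{n} f'(t) \, dt \;\geq\; f(k) - k f'(n-k) \;\geq\; f(k) - k f'(k) \;=\; \frac{k}{(\log k)^2},$$
using the monotonicity of $f'$ twice (first on $[n-k,n]$, then from $n-k \geq k$) and the elementary identity $f(x) - x f'(x) = x/(\log x)^2$.

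Once this lower bound is in hand, $k/(\log k)^2 \geq 2 \log k$ for $k$ large, so $e^{-k/(\log k)^2} = O(1/k^2)$ and $\sum_{k \geq 2} e^{-k/(\log k)^2} < +\infty$; this controls the range $e^2 \leq k \leq n/2$, and by symmetry also $n/2 \leq k \leq n - e^2$. The boundary terms $k \in \{0, 1, n-1, n\}$ contribute $O(1)$, since $\gamma_n / \gamma_{n-1} = e^{f(n) - f(n-1)} \to 1$ (as $f'(x) \to 0$); and the finitely many remaining values $k \in \{2, \ldots, \lfloor e^2 \rfloor\}$ together with their mirror images $n - k$ give, for each fixed $k$, $e^{-R(k,n)} \to e^{-f(k)}$ (since $f(n) - f(n-k) \to 0$), hence contribute $O(1)$ as well. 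Summing everything yields $\gamma_n B_n = O(1)$, which concludes the proof via Lemma \ref{Pascal criterion}.

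The main, though modest, obstacle is spotting the identity $f(x) - x f'(x) = x/(\log x)^2$: this is exactly what makes the concavity estimate deliver a uniformly summable lower bound on $R(k,n)$ over the bulk range. Without it, one is pushed toward a two-term Taylor expansion of $f$ near $n$ and a range-based argument (splitting, say, $k \leq \sqrt n$ from $\sqrt n \leq k \leq n/2$), which is messier but reaches the same conclusion.
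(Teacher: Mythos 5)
Your proof is correct. It rests on the same criterion as the paper's proof, namely verifying the hypothesis $\gamma_n B_n = O(1)$ of Lemma~\ref{Pascal criterion}, so the framework is identical; the difference is in how the exponent $R(k,n)=f(k)+f(n-k)-f(n)$ (the paper's $-\alpha_{n,k}$) is bounded below. The paper only uses the crude inequality $\frac{n-k}{\log (n-k)}\geq \frac{n-k}{\log n}$, which gives $\alpha_{n,k}\leq k\big(\frac{1}{\log n}-\frac{1}{\log k}\big)$, and must then split the range at $\sqrt n$: for $2\leq k\leq \sqrt n$ the bound $-k/(2\log k)$ is summable, while for $\sqrt n\leq k\leq n/2$ each term is merely superpolynomially small and the paper concludes by multiplying by the number of terms $n/2$. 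You instead exploit the concavity of $f(x)=x/\log x$ on $[\e^2,+\infty)$: writing $f(n)-f(n-k)=\int_{n-k}^{n}f'(t)\,dt$ and using twice that $f'$ is decreasing yields the uniform lower bound $R(k,n)\geq f(k)-kf'(k)=k/(\log k)^2$ on the whole bulk range $\e^2\leq k\leq n/2$, which is summable in $k$ uniformly in $n$, so no splitting is needed. Your route costs the computation of $f'$, $f''$ and the identity $f(x)-xf'(x)=x/(\log x)^2$, but buys a single pointwise estimate dominating $\gamma_n B_n$ by one absolute convergent series (and in fact shows that the quantity in Lemma~\ref{Pascal criterion} is controlled without any term-counting), whereas the paper's argument is more elementary in its ingredients but needs the count-times-maximum step on the middle range. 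Your explicit treatment of the boundary terms $k\in\{0,1,n-1,n\}$, of the finitely many $2\leq k\leq \e^2$, and of the symmetric range $n/2\leq k\leq n$, which the paper leaves implicit, is also fine.
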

\begin{proof}
Let 
\begin{displaymath}
\alpha_{n, k} = \frac{n}{\log n} - \frac{k}{\log k} - \frac{n - k}{\log (n - k)} \, \cdot
\end{displaymath}

We have to prove, using Lemma~\ref{Pascal criterion}, that
\begin{displaymath}
\sup_n \bigg( \sum_{k = 2}^{n / 2} \exp (\alpha_{n, k}) \bigg) < + \infty \, . 
\end{displaymath}

Observe that, for $2 \leq k \leq \sqrt n$, we have
\begin{displaymath}
\alpha_{n, k}  \leq \frac{n}{\log n} - \frac{k}{\log k} - \frac{n - k}{\log n} = k \bigg( \frac{1}{\log n} - \frac{1}{\log k} \bigg) 
\leq - \frac{k}{2 \log k} \, ;
\end{displaymath}
hence
\begin{displaymath}
\sum_{k = 2}^{\sqrt n} \exp (\alpha_{n, k}) \leq \sum_{k = 2}^\infty \exp \bigg( - \frac{k}{2 \log k} \bigg) := S < + \infty \, .
\end{displaymath}

For $\sqrt n \leq k \leq n / 2$, we have, in the same way:
\begin{align*}
\alpha_{n, k} 
& \leq k \bigg( \frac{1}{\log n} - \frac{1}{\log k} \bigg) \leq \sqrt n \bigg( \frac{1}{\log n} - \frac{1}{\log n - \log 2} \bigg) \\ 
& = - \frac{{\sqrt n} \, \log 2}{ (\log n) (\log n - \log 2)} \, ;
\end{align*}
hence 
\begin{displaymath}
\sum_{\sqrt n \leq k \leq n / 2} \exp (\alpha_{n, k}) \leq \frac{n}{2} \exp \bigg( -  \frac{{\sqrt n} \, \log 2}{ (\log n) (\log n - \log 2)} \bigg) 
\converge_{n \to \infty} 0 \, . \qedhere
\end{displaymath}
\end{proof}
%

\section{Composition operators induced by the automorphisms on $h^p (\beta)$} \label{sec hp}

For $p=2$, it is easy to check that all the composition operators  $C_{T_a}$, for  $a \in \D$, generated by the automorphisms $T_a$, defined in 
\eqref{def T_a}, (and actually all composition operators, by Littlewood's subordination principle) are  bounded on $H^2$. We recently studied 
\cite{LLQR-weighted} the weighted version $H^{2}(\beta)$ of this space and gave a complete characterization of those weights $\beta = (\beta_n)$ for which 
either $C_\varphi$ is bounded on $H^{2} (\beta)$ for all automorphisms, or for all symbols. In this section, we show that this is never the case for $p \neq 2$. 
We mention in passing that this boundedness issue was previously considered by Blyudze and Shimorin \cite{BlSh} in the case when the initial space is $h^1$ 
and $h^p$ is the target space. 
In that case, the authors show that $C_{T_a} \colon h^1 \to h^p$ is bounded if and only if $p \geq 2$. This has been made much more precise in \cite{SZZA}. 

We will prove the  following result. 
\begin{theorem} \label{alenvers}  
Let $p\in[1,+\infty]$, with $p \neq 2$. Then $C_{T_a}$ is unbounded on $h^p$ for all $a \in \D \setminus \{0\}$. 

Moreover, $C_{T_a}$ is never bounded on $h^{p}(\beta)$, whatever the choice of $\beta$ satisfying \eqref{condition on beta}.
\end{theorem}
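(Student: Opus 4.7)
The plan is to combine a stationary-phase asymptotic for the Taylor coefficients $\widehat{T_a^n}(k)$ with a Fubini-type cancellation argument, applied to the boundedness of $C_{T_a}$ directly when $1\le p<2$ and to its dual when $2<p\le\infty$. Writing $T_a(e^{i\theta})=e^{i\psi(\theta)}$ with $\psi'(\theta)=(1-|a|^2)/|1+\bar a\,e^{i\theta}|^2$, the range of $\psi'$ is $[A,B]$ where $A=(1-|a|)/(1+|a|)<1<B=1/A$. Applying stationary phase to
\[
\widehat{T_a^n}(k)=\frac{1}{2\pi}\int_0^{2\pi} e^{i(n\psi(\theta)-k\theta)}\,d\theta,
\]
for any closed subinterval $[A',B']\subset(A,B)$ containing $1$ and $n$ sufficiently large, one obtains $|\widehat{T_a^n}(k)|\asymp n^{-1/2}$ on $k\in[A'n,B'n]$, while outside this range the coefficients decay rapidly.

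Assume now $C_{T_a}$ is bounded on $h^p(\beta)$. For $1\le p<2$, I first test on $e_n$, using $C_{T_a}e_n=T_a^n$ and the stationary-phase lower bound, to get $\sum_{k\in[A'n,B'n]}\beta_k\lesssim n^{p/2}\beta_n$. Fixing $\rho\in(1,1/A')$ (which is nontrivial since $A'<1$) and summing this inequality over $n\in[N,\rho N]$, the containment $[N,\rho N]\subset[A'n,B'n]$ valid for these $n$ lets a Fubini swap count each $\beta_k$ with $k\in[N,\rho N]$ at least $(\rho-1)N$ times on the left, while the right is $(\rho N)^{p/2}\sum_{n\in[N,\rho N]}\beta_n$. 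Cancelling the common weighted sum yields $N^{1-p/2}\lesssim 1$, contradicting $p<2$ for $N$ large.

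For $2<p\le\infty$ I pass to the dual. The $k$-th Fourier coefficient functional $f\mapsto\widehat{f\circ T_a}(k)=\sum_n\widehat{T_a^n}(k)\widehat f(n)$ has norm at most $\|C_{T_a}\|\beta_k^{-1/p}$ on $h^p(\beta)$, and writing this norm out via duality in the weighted sequence space gives, for $1<p<\infty$,
\[
\sum_{n\in[k/B',k/A']}\beta_n^{1-p'}\lesssim k^{p'/2}\beta_k^{1-p'}.
\]
The same Fubini/cancellation scheme, now in the $k$-variable and with $p'$ in place of $p$, produces $N^{1-p'/2}\lesssim 1$, contradicting $p'<2$. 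For $p=\infty$ the dual condition becomes $\beta_k\sum_{n\in[k/B',k/A']}1/\beta_n\lesssim\sqrt k$; after Fubini-flipping the sum over $k\in[N,\rho N]$, Cauchy--Schwarz $\bigl(\sum_{[N,\rho N]}\beta_n\bigr)\bigl(\sum_{[N,\rho N]}1/\beta_n\bigr)\ge((\rho-1)N)^2$ gives $((\rho-1)N)^2\lesssim N^{3/2}$, again a contradiction.

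The principal technical obstacle will be the uniform lower bound $|\widehat{T_a^n}(k)|\ge cn^{-1/2}$ on $[A'n,B'n]$: the two stationary points of $\psi$ contribute with equal amplitude, and the leading interference factor $|\cos(\Delta(n,k)/2)|$ can vanish on a sparse set of $k$. I plan to deal with this by averaging $|\widehat{T_a^n}(k)|^p\beta_k$ over $k$-windows of constant length (the oscillation period in $k$ being bounded uniformly in $n$), or by combining the Parseval identity $\sum_k|\widehat{T_a^n}(k)|^2=1$ with the upper bound $|\widehat{T_a^n}(k)|\lesssim n^{-1/2}$ to recover the required weighted lower sum in averaged form, which is what the Fubini step actually uses.
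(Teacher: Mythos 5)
Your overall architecture is the same as the paper's: for $p<2$ you test $C_{T_a}$ on the monomials $e_n$ (the paper's column condition \eqref{colonne}), for $p>2$ you pass to the dual functionals $f\mapsto\widehat{f\circ T_a}(k)$ (the paper's row condition \eqref{ligne}), and you then sum over a window $k\asymp n$ and cancel (the paper uses Cauchy--Schwarz on $(\sum\beta_k)(\sum\beta_k^{-1})\ge cl^2$; your direct cancellation of the common weighted sum is a harmless variant). That part is sound. The genuine gap is exactly the point you flag at the end and do not resolve: the uniform lower bound $|\widehat{T_a^n}(k)|\ge c\,n^{-1/2}$ on $k\in[A'n,B'n]$ is false for a fixed $a$, since stationary phase only gives $|\widehat{T_a^n}(k)|\ge\delta n^{-1/2}\,|\cos(\text{interference phase})|+O(n^{-3/5})$ and the cosine vanishes on an $n$-dependent, roughly arithmetic-progression-like set of $k$'s. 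Your two proposed repairs (averaging $|\widehat{T_a^n}(k)|^p\beta_k$ over constant-length $k$-windows; Parseval $\sum_k|\widehat{T_a^n}(k)|^2=1$ combined with the sup bound) only produce lower bounds for \emph{unweighted} averages of $|\widehat{T_a^n}(k)|^p$ over $k$. Since no regularity whatsoever is assumed on $\beta$, the weight may be essentially concentrated, for each relevant $n$, on the bad set where the cosine is small; then the good-$k$ sum does not control $\sum_{k\in[N,\rho N]}\beta_k$ and your Fubini/cancellation step collapses. Ruling this out would require an equidistribution statement in $n$ (for fixed $k$ and fixed $a$) of the interference phase, whose increments are only of constant size and can be resonant; nothing in your sketch provides it. What your Parseval variant does give, modulo uniform stationary-phase estimates including the Airy-size $n^{-1/3}$ coefficients near the edges $k\approx An,\,Bn$, is the \emph{unweighted} half of the statement ($\beta\equiv1$), for $p<2$ and, via your duality reduction, for $p>2$.

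The paper's proof is built precisely to avoid needing any pointwise lower bound at fixed $a$: by rotation invariance, boundedness of $C_{T_a}$ for one $a\neq0$ forces $\Vert C_{T_a}\Vert\le K$ uniformly for $a\in I=[1/2,2/3]$ (inequality \eqref{M}), and Proposition~\ref{new} then gives, for \emph{every individual pair} $(m,n)$ with $m/n\in J$, the averaged bound $\int_I|\widehat{T_a^n}(m)|^s\,da\gtrsim n^{-s/2}$, proved from the expansion \eqref{puisq}, the Fourier series of $|\cos x|$, and the oscillatory-integral Lemma~\ref{rappel}; this works because the phase $n\psi_r(a)$ has $a$-derivative of order $n$, a large parameter that your fixed-$a$ averages in $k$ or $n$ do not enjoy. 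A per-pair lower bound obtained by averaging over a parameter independent of the weight is what makes arbitrary $\beta$ tractable; to complete your argument you must either import this device (uniformity in $a$ plus integration over $a$) or actually prove the fixed-$a$ equidistribution your sketch presupposes.
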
 

S.~Charpentier, N.~Espouiller and R.~Zarouf informed us that they proved, independently, Theorem~\ref{alenvers} for $\beta \equiv 1$, using \cite{SZZA} 
(see \cite{CEZ}). 

\begin{proof} Set 
\begin{displaymath}
I = \bigg[\frac{1}{2} \, \raise 1,5 pt \hbox{,} \, \frac{2}{3} \bigg] \quad \text{ and } \quad  J  = [\alpha^{-  1}, \alpha] \, ,
\end{displaymath}
($\alpha= 5/4$ for instance).

As in \cite[Proposition 4.2]{LLQR-weighted}, the rotation invariance of $h^{p} (\beta)$ allows us to claim that, if $C_{T_a}$ is bounded on $h^{p} (\beta)$ 
for some $a \in \D$, $a \neq 0$, then there exists a constant $K$ such that
\begin{equation} \label{M} 
\Vert C_{T_a} \Vert \leq K \, , \quad  \forall a \in I \, . 
\end{equation} 

Now, everything will rely on the matrix $A = (a_{m,n})_{m, n}$, where 
\begin{displaymath}
a_{m,n} = \widehat{(T_a)^{n}} (m) \,\bigg( \frac{\beta_m}{\beta_n} \bigg)^{1/p} \, ,
\end{displaymath}
which represents $C_{T_a}$ on the canonical (Schauder) basis of $h^{p} (\beta)$. If this matrix defines a bounded operator, its columns and  rows  
(the columns of the transposed operator) are respectively uniformly bounded on $\ell^p$ and on the dual space $\ell^q$ ($q$ the conjugate exponent of $p$), 
that is  (with some $R$ independent of $a \in I$):
\begin{equation} \label{colonne} 
C_n = \sum_{m = 0}^\infty |a_{m, n}|^p \, \frac{\beta_m}{\beta_n} \quad\qquad \text{satisfies} \quad \sup_{n} C_n  \leq R < + \infty \, ,
\end{equation} 
and
\begin{equation} \label{ligne} 
L_m = \sum_{n = 0}^\infty |a_{m, n}|^q \, \bigg( \frac{\beta_m}{\beta_n} \bigg)^{q / p} \quad \text{satisfies} \quad \sup_{m} L_m \leq R < + \infty \, .
\end{equation} 

We will show that, for $p \neq 2$, one of the necessary conditions \eqref{colonne} or  \eqref{ligne} fails. We will hence separate two cases. 
\smallskip

We need an auxiliary result. Recall first the following elementary lemma (see  \cite[Lemma 4.16]{LLQR-weighted}). 
\begin{lemma} \label{rappel} 
Let $f\colon [A, B] \to \R$ be a ${\cal C}^2$-function such that $| f ' |\geq \delta$ and $| f '' | \leq M$. Then
\begin{displaymath} 
\bigg| \int_{A}^B \e^{i f (x)} \, dx \bigg| \leq \frac{2}{\delta} + \frac{M (B - A)}{\delta^2} \, \cdot
\end{displaymath}
\end{lemma}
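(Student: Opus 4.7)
This is the classical van der Corput first-derivative lemma, and my plan is to prove it by a single integration by parts, exploiting the fact that $|f'|$ is bounded below.

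First, I would observe that since $f'$ is continuous on $[A,B]$ and $|f'|\ge\delta>0$, the intermediate value theorem forces $f'$ to keep a constant sign on $[A,B]$. Without loss of generality I would assume $f'\ge\delta$, so that dividing by $f'$ is legitimate. Then I would rewrite the integrand as
\[
\e^{i f(x)} \;=\; \frac{1}{i f'(x)}\,\frac{d}{dx}\bigl(\e^{i f(x)}\bigr)
\]
and integrate by parts on $[A,B]$.

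The boundary term $\bigl[\e^{i f(x)}/(i f'(x))\bigr]_A^B$ has modulus at most $1/|f'(A)|+1/|f'(B)|\le 2/\delta$, which produces the first summand in the announced estimate. The remaining integral, after differentiating $1/(i f'(x))$, contains the factor $f''(x)/f'(x)^2$, whose modulus is pointwise bounded by $M/\delta^2$; integrating over $[A,B]$ thus contributes at most $M(B-A)/\delta^2$. Adding the two contributions yields the claimed inequality.

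There is no serious obstacle here: this is a textbook computation. The only subtle point worth flagging is the reduction to $f'$ of constant sign, without which the single fraction $1/f'$ would be ill-defined and an argument splitting $[A,B]$ into subintervals would be needed. With the sign pinned down, the rest is one line of integration by parts and two triangle-inequality estimates.
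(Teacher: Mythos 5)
Your proof is correct: the sign argument for $f'$ is valid, and the single integration by parts gives exactly the boundary contribution $\leq 2/\delta$ and the integral contribution $\leq M(B-A)/\delta^2$ as claimed. The paper itself gives no proof (it cites the lemma from an earlier paper), and your argument is the standard one for this van der Corput--type estimate, so there is nothing to add.
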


This lemma implies the following extended version of  \cite[Proposition 4.12]{LLQR-weighted}. 
\begin{proposition} \label{new} 
If $r \in J = [\alpha^{- 1}, \alpha]$ and $s \geq 1$, it holds, when $r = m / n$:
\begin{equation} \label{hold} 
\int_{I} | \widehat{T_{a}^{n}} (m) |^{s} \, da \gtrsim \bigg( \int_{I} | \widehat{T_{a}^{n}} (m) | \, da \bigg)^{s} \geq \delta \, n^{- s / 2} \, .
\end{equation} 
\end{proposition}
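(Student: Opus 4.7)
The first inequality in \eqref{hold} is Jensen's inequality on the finite interval $I$: since $s \geq 1$,
\begin{displaymath}
\int_I |f|^s \, da \;\geq\; |I|^{1-s}\Bigl(\int_I |f|\, da\Bigr)^s.
\end{displaymath}
The proposition therefore reduces to establishing $\int_I |\widehat{T_a^n}(m)|\, da \gtrsim n^{-1/2}$ uniformly for $r = m/n \in J$. This case $s=1$ is essentially \cite[Proposition~4.12]{LLQR-weighted}; the present ``extended'' version is an extension purely in the $s$-exponent.

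My plan for the $L^1$-lower bound rests on the method of stationary phase, with Lemma~\ref{rappel} as the workhorse. Since $|T_a(\e^{i\theta})|=1$ for real $a$, one writes $T_a(\e^{i\theta}) = \e^{i\psi_a(\theta)}$, and a direct computation identifies $\psi_a'(\theta) = (1-a^2)/(1+2a\cos\theta+a^2) = P_a(\theta)$, the Poisson kernel at $a$. Then
\begin{displaymath}
\widehat{T_a^n}(m) = \frac{1}{2\pi}\int_{-\pi}^{\pi} \e^{i\phi_{a,n}(\theta)}\, d\theta, \qquad \phi_{a,n}(\theta) := n\psi_a(\theta) - m\theta,
\end{displaymath}
whose critical points in $\theta$ satisfy $P_a(\theta) = r$. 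For $a \in I$ the range of $P_a$ is $[(1-a)/(1+a),(1+a)/(1-a)] \supset [1/3,3] \supset J$, so a unique pair $\pm\theta_0(a,r)$ of non-degenerate critical points exists for each $r \in J$, bounded uniformly away from $0$ and $\pi$. A standard stationary-phase expansion --- with the complement of windows of size $\sim n^{-1/2}$ around $\pm\theta_0$ handled by Lemma~\ref{rappel} --- produces
\begin{displaymath}
\widehat{T_a^n}(m) = \frac{B(a,r)}{\sqrt n}\cos\bigl(n\,G(a,r) + c_0\bigr) + O(n^{-1}),
\end{displaymath}
with $B$ smooth and bounded below on $I \times J$, and $G(a,r) := \psi_a(\theta_0(a,r)) - r\,\theta_0(a,r)$.

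The principal obstacle is to prevent this oscillating asymptotic from averaging to zero in $a$: one must check that $\partial_a G$ is uniformly bounded away from zero on $I \times J$. Since $\psi_a'(\theta_0) = r$ annihilates the chain-rule cross term, $\partial_a G(a,r) = (\partial_a \psi_a)(\theta_0(a,r))$, and differentiating $T_a(\e^{i\theta}) = \e^{i\psi_a(\theta)}$ in $a$ (using $\partial_a T_a(z) = (1-z^2)/(1+az)^2$) gives
\begin{displaymath}
|\partial_a G(a,r)| = \frac{2\,|\sin\theta_0(a,r)|}{1+2a\cos\theta_0(a,r)+a^2} \;\geq\; \delta > 0
\end{displaymath}
uniformly on $I \times J$, because $\theta_0$ stays away from $0$ and $\pi$. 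Lemma~\ref{rappel} applied to $\int_I \e^{inG(a,r)}\, da$ then yields $\bigl|\int_I \e^{inG(a,r)}\, da\bigr| \lesssim n^{-1}$, and so
\begin{displaymath}
\int_I |\cos(nG(a,r)+c_0)|^2\, da \;=\; \tfrac{|I|}{2} + O(n^{-1}) \;\geq\; \tfrac{|I|}{3}
\end{displaymath}
for $n$ large. Since $|\cos|^2 \leq |\cos|$, this forces $\int_I |\cos(nG(a,r)+c_0)|\, da \geq |I|/3$, and combining with the stationary-phase asymptotic above yields $\int_I |\widehat{T_a^n}(m)|\, da \gtrsim n^{-1/2}$. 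The opening Jensen step then upgrades this to general $s \geq 1$, completing the proposition.
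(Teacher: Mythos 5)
Your argument is correct, and it follows the same overall strategy as the paper (a stationary-phase lower bound of size $n^{-1/2}$ times an oscillating cosine in $a$, followed by an averaging argument based on Lemma~\ref{rappel}), but it differs in both halves in a genuine way. For the pointwise asymptotics, the paper simply quotes \cite[Proposition~4.14]{LLQR-weighted}, which provides exactly the bound \eqref{puisq} together with the fact that the phase $\psi_r$, \emph{as a function of $a$}, satisfies the hypotheses of Lemma~\ref{rappel}; you re-derive this from scratch, and in particular your envelope computation $\partial_a G(a,r)=(\partial_a\psi_a)(\theta_0(a,r))=-2\sin\theta_0/(1+2a\cos\theta_0+a^2)$, bounded away from $0$ because $\theta_0$ stays in a compact subset of $(0,\pi)$ for $(a,r)\in I\times J$, is precisely the non-degeneracy that the citation delivers as a black box (you should also record the companion bound $|\partial_a^2G|\leq M$ on $I\times J$, needed to invoke Lemma~\ref{rappel}, though it is immediate by smoothness and compactness). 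For the averaging step, the paper expands $|\cos x|=c+\sum_{l\geq1}\delta_l\cos lx$ and applies Lemma~\ref{rappel} to every harmonic, whereas you use $\cos^2\leq|\cos|$ together with a single application of Lemma~\ref{rappel} to the doubled phase $2nG$, which is arguably cleaner and avoids the summation over $l$. What each approach buys: the paper's proof is very short given the earlier reference; yours is self-contained in outline, but its real technical content --- the stationary-phase expansion with amplitude bounded below and error term \emph{uniform in $(a,r)\in I\times J$}, which you label ``standard'' --- is exactly what \cite[Proposition~4.14]{LLQR-weighted} was written to supply, so as a complete proof it should either defer to that result (as the paper does) or carry out the uniform expansion in detail. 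The opening Jensen/H\"older reduction and the final combination (including the trivial treatment of bounded $n$) are fine.
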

\begin{proof} The first inequality in \eqref{hold} is just H\"older's inequality. For the second one, we proved in \cite[Proposition 4.14]{LLQR-weighted} that 
\begin{equation} \label{puisq} 
|\widehat{T_{a}^{n}} (m) | \geq \delta \, n^{- 1 / 2} \big| \cos (n \psi_{r} (a) + \pi / 4 ) \big| + O \, (n^{- 3 / 5}) \, ,
\end{equation} 
where $\psi_r = f$ satisfies the assumptions of Lemma~\ref{rappel}, and the constant controlling the error term $O \, (n^{- 3 / 5})$ 
\emph{does not depend on $a$}. Moreover, we have the classical Fourier expansion 
\begin{displaymath}
| \cos x | = c + \sum_{l = 1}^\infty \delta_{l} \cos l x \, , 
\end{displaymath}
with $c > 0$ and $\delta_l = O \, ( l^{- 2})$. Hence (actually $\sum_{l = 1}^\infty | \delta_l | < + \infty$ would suffice): 
\begin{equation} \label{expans}  
|\widehat{T_{a}^{n}} (m) | 
\geq \delta \, n^{- 1 / 2} \bigg( c + \sum_{l = 1}^\infty  \delta_l \, \cos \big(l (n\psi_{r} (a) + \pi / 4 ) \big) \bigg) + O\, (n^{- 3 / 5}) \, .
\end{equation}
We now apply Lemma~\ref{rappel} with $f (a) = l \big(n \psi_{r} (a) + \pi / 4 \big)$. Here, for given $l$, we have $|f ' | \geq \delta \, n l$ and 
$|f '' |\leq  M n l$ on $I$, so that
\begin{displaymath}
\bigg| \int_{I} \cos \big(  l (n\psi_{r} (a) + \pi / 4 ) \big) \, da \bigg| \lesssim \bigg( \frac{1}{n l} + \frac{n l}{n^2 l^2} \bigg) 
\approx \frac{1}{n l} \, \cdot
\end{displaymath}
It now follows from \eqref{puisq} that 
\begin{align*}
\int_{I} | \widehat{T_{a}^{n}} (m) | \, da 
& \geq \delta \, n^{- 1 / 2} \bigg( c + O \, \Big( \sum_{l = 1}^\infty \frac{| \delta_l |}{n l} \Big) \bigg) + O \, (n^{- 3 / 5}) \\ 
& \geq \delta c\, n^{- 1 / 2} + O \, (n^{- 3 / 5}) \geq  \delta' \, n^{- 1 / 2} \, ,
\end{align*}
and this ends the proof.
\end{proof}

We now come back to the proof of Theorem~\ref{alenvers}. We will reason by contradiction, and separate two cases. 
\smallskip

We set $J_l = [l/\sqrt{\alpha}\,, \sqrt{\alpha}~ l]$. 
\smallskip

\noindent $\bullet$ {\sl Case $p < 2$}. 

It follows from \eqref{colonne} that,  for $n \in J_l$, we have
\begin{displaymath}
\sum_{m \in J_l}  | \widehat{T_{a}^{n}} (m) |^p \, \frac{\beta_m}{\beta_n} \leq R \, .
\end{displaymath}
Integrating on $I$ and using Proposition~\ref{new} give, since $m, n \in J_l$ (hence $m/n\in J$) have the same size as $l$: 
\begin{displaymath}
l^{- p / 2} \sum_{m \in J_l} \frac{\beta_m}{\beta_n} \lesssim R  \, .
\end{displaymath}
Now, summing up over $n \in J_l$ further gives 
\begin{displaymath}
l^{- p / 2} \bigg( \sum_{k \in J_l} \beta_k \bigg) \bigg(\sum_{k \in J_l} \beta_{k}^{- 1} \bigg) \lesssim R\, l \, .
\end{displaymath}
But, by the Cauchy-Schwarz inequality:
\begin{displaymath}
l^2 \lesssim |J_l|^2 \leq \bigg( \sum_{k \in J_l} \beta_k \bigg) \bigg( \sum_{k \in J_l} \beta_{k}^{-  1} \bigg) \, ,
\end{displaymath}
and we get $l^{2 - p / 2} \lesssim R \, l$ or $R \gtrsim l^{1 - p / 2}$. 

Since $1 - p / 2 > 0$, we have a contradiction for large $l$. 
\smallskip

\noindent $\bullet$ {\sl Case $p > 2$}. 

The proof is nearly the same. It now follows from \eqref{ligne} that, for $m \in J_l$:
\begin{displaymath}
\sum_{n \in J_l}  | \widehat{T_{a}^{n}} (m)  |^q \, \bigg( \frac{\beta_m}{\beta_n} \bigg)^{q / p} \leq R \, .
\end{displaymath}
Set $\gamma_k = \beta_{k}^{\, q / p}$  and proceed as in the case $p < 2$ to get 
\begin{displaymath}
l^{- q / 2} \bigg( \sum_{k \in J_l} \gamma_k \bigg) \bigg( \sum_{k \in J_l} \gamma_{k}^{- 1}\bigg) \lesssim R \, l \, ,
\end{displaymath}
or, again by the Cauchy-Schwarz inequality, $l^{2 - q / 2} \lesssim R \, l$ or $R \gtrsim l^{1 -q  / 2}$. 

Since this time $1 - q / 2 > 0$, we have again a contradiction for large $l$. 
\end{proof}

\noindent{\bf Acknowledgement.}  Parts of this paper was written while the fourth-named author was visiting the Universit\'e de Lille and the 
Universit\'e d'Artois in January 2023 and while the first-named author was visiting the Universidad de Sevilla in March 2023; it a pleasure to thank all their 
colleagues of these universities for their warm welcome. 

H.~Queff\'elec is partially supported by the Labex CEMPI, ANR-11-LABX-0007-01.

L.~Rodr{\'\i}guez-Piazza research has been funded by the PID2022-136320NB-I00 project / AEI/10.13039 /501100011033/ FEDER, UE. 


\smallskip\goodbreak

{\footnotesize
Pascal Lef\`evre \\
Univ. Artois, UR 2462, Laboratoire de Math\'ematiques de Lens (LML), F-62300 Lens, France \\
pascal.lefevre@univ-artois.fr
\smallskip

Daniel Li \\ 
Univ. Artois, UR 2462, Laboratoire de Math\'ematiques de Lens (LML), F-62300 Lens, France \\
daniel.li@univ-artois.fr
\smallskip

Herv\'e Queff\'elec \\
Univ. Lille, CNRS, UMR 8524 -- Laboratoire Paul Painlev\'e, F-59000 Lille, France \\
Herve.Queffelec@univ-lille.fr
\smallskip
 
Luis Rodr{\'\i}guez-Piazza \\
Universidad de Sevilla, Facultad de Matem\'aticas, Departamento de An\'alisis Matem\'atico \& IMUS,  
Calle Tarfia s/n  
41\kern 1mm 012 SEVILLA, SPAIN \\
piazza@us.es
}

\end{document}